\numberwithin{equation}{section}
\numberwithin{figure}{section}
\theoremstyle{plain}
\newtheorem{thm}{\protect\theoremname}
  \theoremstyle{plain}
  \newtheorem{conjecture}[thm]{\protect\conjecturename}
  \theoremstyle{plain}
  \newtheorem{prop}[thm]{\protect\propositionname}
  \theoremstyle{plain}
  \newtheorem{lem}[thm]{\protect\lemmaname}
  \theoremstyle{definition}
  \newtheorem{defn}[thm]{\protect\definitionname}
  \theoremstyle{definition}
  \newtheorem{example}[thm]{\protect\examplename}
\author{Yotam Smilansky} \address{Raymond and Beverly Sackler School of Mathematical Sciences, Tel Aviv University, Tel Aviv 69978, Israel} \email{yotamsmi@post.tau.ac.il}
\date{\today}
  \providecommand{\conjecturename}{Conjecture}
  \providecommand{\definitionname}{Definition}
  \providecommand{\examplename}{Example}
  \providecommand{\lemmaname}{Lemma}
  \providecommand{\propositionname}{Proposition}
\providecommand{\theoremname}{Theorem}
\begin{document}
\global\long\def\bbR{\mathbb{R}}

\global\long\def\calR{\mathcal{P}}

\global\long\def\frakR{\mathfrak{R}}

\global\long\def\inp#1#2{\left\langle #1,#2\right\rangle }

\global\long\def\eqm#1#2#3{#1\equiv#2\,\left(#3\right)}

\global\long\def\b{\beta}

\global\long\def\pden#1{\mathcal{M}_{#1}(n)}

\global\long\def\lden#1{\mathcal{L}_{#1}(n)}

\global\long\def\Re#1{\mbox{Re}(#1)}

\global\long\def\Res#1#2{\mbox{Res}_{s=#1}#2}

\title{Sums of Two Squares -- Pair Correlation \& Distribution In Short
Intervals }
\begin{abstract}
In this work we show that based on a conjecture for the pair correlation
of integers representable as sums of two squares, which was first
suggested by Connors and Keating and reformulated here, the second
moment of the distribution of the number of representable integers
in short intervals is consistent with a Poissonian distribution, where
``short'' means of length comparable to the mean spacing between
sums of two squares. In addition we present a method for producing
such conjectures through calculations in prime power residue rings
and describe how these conjectures, as well as the above stated result,
may by generalized to other binary quadratic forms. While producing
these pair correlation conjectures we arrive at a surprising result
regarding Mertens' formula for primes in arithmetic progressions,
and in order to test the validity of the conjectures, we present numerical
computations which support our approach.
\end{abstract}
\maketitle

\section{Introduction}

Throughout this work $n,k$ and $h$ will denote positive integers,
$p$ will denote prime numbers and for abbreviation reasons we use
$\eqm abc$ instead of $a\equiv b\mbox{ mod \ensuremath{c}}$. In
addition, we say $m_{p}(n)=k$ if $p^{k}\mid n$ but $p^{k+1}\nmid n$.

\subsection{Background and motivation.}

When studying the distribution of a sequence of integers, for example
the sequence of primes or of those representable as a sum of two squares,
a natural first step would be to understand the mean density of such
integers. For prime numbers this was achieved by Hadamard and de la
Vallée-Poussin with their famous Prime Number Theorem, and for sums
of squares by Landau \cite{key-1}. In order to learn more about
the distribution of such a set the next step would be to look at the
$k-$point correlation, or in other words to find an expression for
\[
\frac{1}{n}\sum_{m=1}^{n}f\left(m+d_{1}\right)\cdots f\left(m+d_{k}\right)
\]
as $n\rightarrow\infty,$ where $f$ is the characteristic function
of the set at hand and $d_{1},...,d_{k}$ are distinct integers. These
correlations give increasingly more precise data about the distribution,
where the $2-$point correlation provides the leading quantitative
estimate of the fluctuations about the mean density of the sequence. 

Regarding the sequence of primes, Hardy and Littlewood gave \cite{key-2}
the following $k-$tuple conjecture for the number $\pi_{\mathbf{d}}(n)$
of positive integers $m\leq n$ for which all of $m+d_{1},...,m+d_{k}$
are prime,\textbf{ $\mathbf{d}=(d_{1},...,d_{k})$} and $d_{1},...,d_{k}$
distinct integers. The conjecture is 
\begin{equation}
\pi_{\mathbf{d}}(n)\sim\mathscr{S}_{\mathbf{d}}\frac{n}{\left(\log n\right)^{k}}\label{eq:Hardy-Littlwood}
\end{equation}
as $n\rightarrow\infty$, provided $\mathscr{S}_{\mathbf{d}}\not=0,$
where the ``singular series'' $\mathscr{S}_{\mathbf{d}}$ is 
\[
\mathscr{S}_{\mathbf{d}}=\prod_{p}\frac{p^{k-1}\left(p-\nu_{\mathbf{d}}(p)\right)}{\left(p-1\right)^{k}}
\]
and $\nu_{\mathbf{d}}(p)$ stands for the number of residue classes
modulo $p$ occupied by $d_{1},...,d_{k}$.

For $k=1$ this is exactly the Prime Number Theorem, and for $k\geq2$
it has not been proved for any $\mathbf{d}$.

\subsection{From a $k-$tuple conjecture to distribution in short intervals.}

We will follow Gallagher's work \cite{key-4} on primes in order to
obtain the moments of the distribution of the number of integers representable
as a sum of two squares in short intervals. Consider first the set
of primes and the Prime Number Theorem, which states that as $n\rightarrow\infty$
\[
\pi(n)\sim\frac{n}{\log n}.
\]

This relation can be understood as the statement that the number of
primes in an interval $\left(m,m+\alpha\right)$, averaged over $m\leq n$,
tends to the limit $\lambda$, when $n$ and $\alpha$ tend to infinity
in such a way that $\alpha\sim\lambda\log n$ with $\lambda$ a positive
constant. 

Gallagher studies the distribution of values of $\pi(m+\alpha)-\pi(m)$
for $m\leq n$ and $\alpha\sim\lambda\log n$, and shows that, assuming
the prime $k-$tuple conjecture of Hardy and Littlewood (\ref{eq:Hardy-Littlwood}),
it suffices that 
\begin{equation}
\sum\limits _{1\leq d_{1},..,d_{k}\leq H}\mathscr{S}_{\mathbf{d}}\sim H^{k}\label{eq:Gallagher's lemma primes}
\end{equation}
as $H\rightarrow\infty$ holds for all $k\in\mathbb{N}$ in order
to prove that all the moments of the distribution tend to moments
of a Poisson distribution, and so the distribution tends to a Poisson
distribution with parameter $\lambda$ as $n\rightarrow\infty$. This
means that the distribution of primes in such intervals is similar
to the distribution of a random set of integers with mean $\lambda$,
and so even though clearly primes are not distributed randomly, in
the perspective of intervals such as those we deal with here they
do. Gallagher has proved (\ref{eq:Gallagher's lemma primes}) in \cite{key-4},
and a simpler proof was presented by Ford \cite{key-10}. We shall
refer to this result as Gallagher's Lemma. 

Consider now the set of integers which are representable as a sum
of two squares and Landau's theorem, which states that $B(n)$, the
number of such integers up to $n$, is given asymptotically by 
\begin{equation}
B(n)\sim\b\frac{n}{\sqrt{\log n}}+O\left(\frac{n}{\log^{\frac{3}{2}}n}\right)\label{eq:Landau's theorem}
\end{equation}
as $n\rightarrow\infty$, where $\b=\frac{1}{\sqrt{2}}\prod\limits _{\eqm p34}\left(1-p^{-2}\right)^{-1/2}$
is the Landau-Ramanujan constant (see \cite{key-13}). 

This relation can be understood as the statement that the number of
integers representable as a sum of two squares in an interval $\left(m,m+\alpha\right)$,
averaged over $m\leq n$, tends to the limit $\lambda$, when $n$
and $\alpha$ tend to infinity in such a way that $\alpha\sim\frac{\lambda}{\b}\sqrt{\log n}$
with $\lambda$ a positive constant. 

We wish to study the distribution of values of $B(m+\alpha)-B(m)$
for $m\leq n$ and $\alpha\sim\frac{\lambda}{\b}\sqrt{\log n}$. In
order to follow Gallagher's method we need first a conjecture analogous
to Hardy and Littlewood's conjecture for sums of two squares, that
is an asymptotic formula for the number $B_{\mathbf{d}}(n)$ of positive
integers $m\leq n$ for which all of $m+d_{1},...,m+d_{k}$ can be
represented as a sum of two squares,\textbf{ $\mathbf{d}=(d_{1},...,d_{k})$}
and $d_{1},...,d_{k}$ distinct integers. The conjecture, analogous
to (\ref{eq:Hardy-Littlwood}), is that there exists a function $\mathscr{T}_{\mathbf{d}}$,
the ``singular series for our problem'', for which the limit 
\begin{eqnarray}
B_{\mathbf{d}}(n)\sim\mathscr{T}_{\mathbf{d}}\frac{n}{\left(\sqrt{\log n}\right)^{k}}\label{eq:Hardy-LittlewoodSumsofsquares}
\end{eqnarray}
holds. If this is so, then the function $\mathscr{T}_{\mathbf{d}}$
depends only on the differences between the $d_{1},...,d_{k}$, in
the sense that $\mathscr{T}_{\mathbf{d}}=\mathscr{T}_{\mathbf{d}+\mathbf{1}}$
where \textbf{$\mathbf{1}=(1,...,1)$}.

Assuming this conjecture, it is enough to show that the singular series
$\mathscr{T}_{\mathbf{d}}$ has mean value $\b$, that is that the
limit 
\begin{equation}
\sum\limits _{1\leq d_{1},..,d_{k}\leq H}\mathscr{T}_{\mathbf{d}}\sim\left(\b H\right)^{k}\label{eq:Gallagher's lemma sum of squares}
\end{equation}
as $H\rightarrow\infty$ holds, for the moments to be Poisson with
parameter $\lambda$.

\subsection{Main result.}

Connors and Keating conjectured in \cite{key-3} that for $k=2$ and
$h=\left|d_{2}-d_{1}\right|$ we have
\[
B_{h}(n)\sim\mathscr{T}_{h}\frac{n}{\left(\sqrt{\log n}\right)^{2}}
\]
as $n\rightarrow\infty$, with the following ``singular series''

\begin{equation}
\mathscr{T}_{h}=2W_{2}(h){\displaystyle \prod_{\begin{array}{c}
\substack{\eqm p34\\
p\mid h
}
\end{array}}}\dfrac{1-p^{-(m_{p}(h)+1)}}{1-p^{-1}}\label{eq:Keating}
\end{equation}
where $m_{p}(h)$ is the power to which the prime $p$ is raised in
the prime decomposition of $h$ and 
\begin{eqnarray*}
W_{2}(h) & = & \begin{cases}
\begin{array}{c}
\frac{1}{4}\\
\frac{2^{m_{2}(h)+1}-3}{2^{m_{2}(h)+2}}
\end{array} & \begin{array}{c}
m_{2}(h)=0\\
m_{2}(h)\geq1
\end{array}\end{cases}
\end{eqnarray*}

\begin{thm}
\label{thm:main result}For $k=2$ and $h=\left|d_{2}-d_{1}\right|$
the singular series $\mathscr{T}_{h}$ has mean value $\b$. More
explicitly 

\[
\sum\limits _{1\leq d_{1\not}\not=d_{2}\leq H}\mathscr{T}_{h}=2\sum_{1\leq h\leq H-1}\left(H-h\right)\mathscr{T}_{h}=\b^{2}H^{2}+O_{\varepsilon}(H^{1+\varepsilon})
\]
as $H\rightarrow\infty$, for all $\varepsilon>0$.
\end{thm}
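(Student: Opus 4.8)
The plan is to reduce the weighted double sum to the summatory function $S(x)=\sum_{h\le x}\mathscr{T}_{h}$ and then to evaluate $S$ by recognizing $\mathscr{T}_{h}$ as half of a multiplicative function whose Dirichlet series is $\zeta(s)$ times a factor that is analytic and absolutely convergent in a half-plane reaching well to the left of $s=1$.

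First I would dispose of the elementary combinatorial identity. For each integer $h$ with $1\le h\le H-1$ there are exactly $H-h$ ordered pairs $(d_{1},d_{2})$ with $1\le d_{1},d_{2}\le H$ and $d_{2}-d_{1}=h$, and equally many with $d_{1}-d_{2}=h$; since $\mathscr{T}$ depends only on $|d_{2}-d_{1}|$ this yields the left equality $\sum_{1\le d_{1}\ne d_{2}\le H}\mathscr{T}_{h}=2\sum_{1\le h\le H-1}(H-h)\mathscr{T}_{h}$. Writing $S(x)=\sum_{h\le x}\mathscr{T}_{h}$ and using the identity $\sum_{h\le H}(H-h)\mathscr{T}_{h}=\int_{0}^{H}S(t)\,dt$, the whole theorem reduces to the single estimate $S(x)=\beta^{2}x+O_{\varepsilon}(x^{\varepsilon})$: integrating this over $[0,H]$ and doubling produces $\beta^{2}H^{2}+O_{\varepsilon}(H^{1+\varepsilon})$.

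To prove the estimate for $S$, set $M_{h}=2\mathscr{T}_{h}$. Unwinding (\ref{eq:Keating}) shows that $M$ is multiplicative with $M_{1}=1$: its local factor is trivial at primes $p\equiv3\ (4)$ not dividing $h$, equals $\tfrac{1-p^{-(a+1)}}{1-p^{-1}}$ at $p^{a}\|h$ for $p\equiv3\ (4)$, is trivial at every $p\equiv1\ (4)$, and is an explicit function of $m_{2}(h)$ at $p=2$. Forming $D(s)=\sum_{h}M_{h}h^{-s}=\prod_{p}D_{p}(s)$ and summing each geometric-type local series, I find that for $p\equiv3\ (4)$ one has $D_{p}(s)=\big((1-p^{-s})(1-p^{-s-1})\big)^{-1}$, while $D_{2}(s)$ is a rational function of $2^{-s}$. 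Dividing by $\zeta(s)=\prod_{p}(1-p^{-s})^{-1}$ then yields the factorization $D(s)=\zeta(s)C(s)$ in which the $p\equiv1$ factors cancel completely, the $p\equiv3$ factors collapse to the \emph{shifted} product $\prod_{p\equiv3\ (4)}(1-p^{-s-1})^{-1}$, and the prime $2$ contributes a rational factor $C_{2}(s)$ regular for $\Re s>-1$. The decisive point is the shift by one in the exponent: the product over $p\equiv3\ (4)$ converges absolutely for $\Re s>0$, so $C(s)$ is analytic and bounded there. A direct check gives $C_{2}(1)=1$, hence $C(1)=\prod_{p\equiv3\ (4)}(1-p^{-2})^{-1}=2\beta^{2}$, using $\beta^{2}=\tfrac{1}{2}\prod_{p\equiv3\ (4)}(1-p^{-2})^{-1}$.

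Finally I would convert this analytic information into a sharp count. Writing $M=\mathbf{1}\ast c$ with $c$ the multiplicative function attached to $C(s)=\sum_{d}c_{d}d^{-s}$, the local computation gives $|c_{p^{a}}|\ll p^{-a}$ for every prime, whence $\sum_{d\le x}|c_{d}|\ll_{\varepsilon}x^{\varepsilon}$, the series $\sum_{d}c_{d}/d$ converges to $C(1)=2\beta^{2}$, and its tail beyond $x$ is $O_{\varepsilon}(x^{-1+\varepsilon})$. Then $\sum_{h\le x}M_{h}=\sum_{d\le x}c_{d}\lfloor x/d\rfloor=x\sum_{d=1}^{\infty}\tfrac{c_{d}}{d}+O_{\varepsilon}(x^{\varepsilon})=2\beta^{2}x+O_{\varepsilon}(x^{\varepsilon})$, so $S(x)=\beta^{2}x+O_{\varepsilon}(x^{\varepsilon})$, which closes the reduction above. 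The main obstacle is precisely the factorization step: one must check that dividing out $\zeta(s)$ leaves only the shifted Euler product over $p\equiv3\ (4)$ together with a harmless factor at $2$, since it is this shift that upgrades the merely Tauberian conclusion $S(x)\sim\beta^{2}x$ to the strong error term $O_{\varepsilon}(x^{\varepsilon})$ needed for the $O_{\varepsilon}(H^{1+\varepsilon})$ in the statement. The prime $2$, where the local factor is not of the simple geometric form, must be treated separately through the explicit rational expression for $D_{2}(s)$.
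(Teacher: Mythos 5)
Your proposal is correct, and while it shares the paper's central structural idea, it completes the argument by a genuinely different (and more elementary) route. Both you and the paper normalize to the multiplicative function $a(h)=2\mathscr{T}_{h}$, form its Dirichlet series $D(s)$, factor $D(s)=\zeta(s)C(s)$, and identify $C(1)=\prod_{p\equiv3\,(4)}(1-p^{-2})^{-1}=2\beta^{2}$ as the source of the main term. Two points distinguish your treatment. First, you observe that the local factor at $p\equiv3\,(4)$ collapses exactly to $\bigl((1-p^{-s})(1-p^{-s-1})\bigr)^{-1}$ (this identity is correct: the numerator of the combined fraction telescopes to $1$), so that $C(s)$ is literally the shifted Euler product $C_{2}(s)\prod_{p\equiv3\,(4)}(1-p^{-s-1})^{-1}$; the paper never simplifies this far and only bounds the ratio of local factors by $1+O(p^{-\sigma-1})$ to get analyticity of $C(s)=A(s)$ in $\sigma>0$. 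Second, and more substantially, the paper converts the analytic data into the asymptotic via Perron's formula applied directly to the weighted sum $\sum_{h\le H}a(h)(H-h)$, a contour shift to $\sigma_{0}\in(0,1)$, and the convexity bound $|\zeta(\sigma+it)|\ll_{\varepsilon}(1+|t|)^{(1-\sigma)/2+\varepsilon}$ to control the shifted integral, yielding the residue term $\tfrac{A(1)}{2}H^{2}=\beta^{2}H^{2}$ plus $O(H^{1+\sigma_{0}})$. You instead avoid complex analysis entirely: writing $a=\mathbf{1}\ast c$ with $|c_{p^{k}}|\ll p^{-k}$ (indeed $c_{p^{k}}=p^{-k}$ for $p\equiv3\,(4)$, $c_{p^{k}}=0$ for $p\equiv1\,(4)$, and $|c_{2^{k}}|\le 3\cdot 2^{-k}$), the hyperbola-type count $\sum_{h\le x}a(h)=\sum_{d\le x}c_{d}\lfloor x/d\rfloor$ gives $2\beta^{2}x+O_{\varepsilon}(x^{\varepsilon})$, and the identity $\sum_{h\le H}(H-h)\mathscr{T}_{h}=\int_{0}^{H}S(t)\,dt$ finishes the job. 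Both routes deliver the same error term $O_{\varepsilon}(H^{1+\varepsilon})$; the paper's Perron machinery is the template it reuses verbatim for the forms $x^{2}+dy^{2}$ in Section 6, whereas your argument is self-contained, real-variable, and makes the source of the power saving (the shift $s\mapsto s+1$ in the Euler product over $p\equiv3\,(4)$) completely transparent.
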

Assuming the validity of Connors and Keating's pair correlation conjecture
this result implies that Gallagher's Lemma for sums of two squares
and $k=2$ holds, or in other words we show that assuming the conjecture,
the second moment of the distribution of values of $B(m+\alpha)-B(m)$
for $m\leq n$ and $\alpha\sim\frac{\lambda}{\b}\sqrt{\log n}$ is
consistent with a Poissonian distribution with parameter $\lambda$.

\subsection{Mean density and pair correlation.}

We provide a new method of conjecturing estimates for the pair correlation
function stated above, which goes through the mean density and pair
correlation of elements representable as a sum of two squares in residue
rings of the form $\nicefrac{\mathbb{Z}}{p^{k}\mathbb{Z}}$ for primes
$p$ and $k\rightarrow\infty$. The naive expectation for the density
of sums of two squares is 
\begin{equation}
\pden{}:=\frac{1}{2}\prod_{\begin{array}{c}
\substack{\eqm p34\\
p\leq n
}
\end{array}}\left(1+p^{-1}\right)^{-1}\label{eq:Productdensity}
\end{equation}
where $\pden{}$ is simply the product of the densities in the residue
rings described above. We compare this expression with the leading
term of the analytic result for the density of representable integers
given by Landau
\begin{equation}
\lden{}=\frac{\b}{\sqrt{\log n}}\label{eq:Landaudensity}
\end{equation}
and produce the precise ratio between the two and show that
\begin{equation}
y:=\lim_{n\rightarrow\infty}\frac{\pden{}}{\lden{}}=\frac{1}{2}\sqrt{\frac{\pi}{e^{\gamma}}}\label{eq:mitzi constant}
\end{equation}
where $\gamma$ is Euler's constant, using a version of Mertens' formula
in geometric progressions described in \cite{key-6}. Comparing this
to the case of the Prime Number Theorem and Mertens' original formula
\[
\lim_{n\rightarrow\infty}\frac{\prod\limits _{p\leq n}\left(1-p^{-1}\right)}{\pi(n)}=\frac{1}{e^{\gamma}}
\]
we see that as in the case of the primes we are off by a factor. 

Next we derive (\ref{eq:Keating}) in similar methods to those used
for the mean density (\ref{eq:Productdensity}). Denote by $\mathcal{M}^{(2)}(n,h)$
the product of densities of representable pairs $\left(a,a+h\right)$
in the rings $\nicefrac{\mathbb{Z}}{p^{k}\mathbb{Z}}$, $k\rightarrow\infty$
where the product is over primes $p\leq n$ (see Section \ref{sec:Representable-Pairs-In}
for the detailed definition). We then make the following conjecture,
which is equivalent to that of Keating and Connors.
\begin{conjecture}
Let 
\[
Y_{h}(n):=\frac{\mathcal{M}^{(2)}(n,h)}{\frac{1}{n}\#\left\{ \mbox{\ensuremath{m\leq n}: \ensuremath{m}\,\ and \ensuremath{m+h}\,\ are representable}\right\} }.
\]
Then $Y_{h}(n)$ converges and the following limit holds for every
$h\in\mathbb{N}$ 
\[
\lim_{n\rightarrow\infty}Y_{h}(n)=\frac{1}{4}\frac{\pi}{e^{\gamma}}=:y^{2}.
\]

\end{conjecture}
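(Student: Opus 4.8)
The plan is to evaluate the numerator $\mathcal{M}^{(2)}(n,h)$ exactly and thereby reduce the whole statement to the already-established mean-density comparison (\ref{eq:mitzi constant}). Writing $D_{h}(n):=\frac{1}{n}\#\{m\le n:m\text{ and }m+h\text{ are representable}\}$ for the denominator, so that $Y_{h}(n)=\mathcal{M}^{(2)}(n,h)/D_{h}(n)$, the target is the single asymptotic
\[
\mathcal{M}^{(2)}(n,h)\sim y^{2}\,\mathscr{T}_{h}\,\frac{1}{\log n}\qquad(n\to\infty).
\]
This asymptotic is unconditional, since $\mathcal{M}^{(2)}(n,h)$ is a deterministic product of local densities. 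Once it is in hand, $Y_{h}(n)\to y^{2}$ holds if and only if $D_{h}(n)\sim\mathscr{T}_{h}/\log n$, which is exactly the Connors--Keating conjecture (\ref{eq:Hardy-LittlewoodSumsofsquares}) for $k=2$ with the singular series (\ref{eq:Keating}); thus the computation both proves the stated limit under that conjecture and establishes the asserted equivalence of the two conjectures.

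The heart of the argument is the factorisation of $\mathcal{M}^{(2)}(n,h)$ against the square of the single-element product density $\pden{}$. By definition each is a product over primes $p\le n$ of local densities: the single local density is $1$ unless $\eqm p34$, in which case it equals $(1+p^{-1})^{-1}$, the density in $\nicefrac{\mathbb{Z}}{p^{k}\mathbb{Z}}$ as $k\to\infty$ of residues $a$ with $m_{p}(a)$ even (representability forces every prime $\eqm p34$ to even order); the pair local density $\delta_{p}(h)$ is the density of $a$ for which $m_{p}(a)$ and $m_{p}(a+h)$ are both even. I would first compute $\delta_{p}(h)$ for $\eqm p34$ with $p\nmid h$. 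Since $p\nmid h$ rules out $p\mid a$ and $p\mid a+h$ simultaneously, a short geometric-series count over the three cases $a\equiv0$, $a\equiv-h$, and $a\not\equiv0,-h\pmod p$ gives
\[
\delta_{p}(h)=\left(1-2p^{-1}\right)+\frac{2p^{-2}}{1+p^{-1}}=\frac{1-p^{-1}}{1+p^{-1}}=\left(1-p^{-2}\right)\left(1+p^{-1}\right)^{-2},
\]
so the ratio of the pair local density to the square of the single one is precisely $1-p^{-2}$. For $\eqm p34$ with $p\mid h$, the same valuation count, now split according to whether $m_{p}(a)$ is less than, equal to, or greater than $m_{p}(h)$, produces the ratio $\left(1-p^{-2}\right)\frac{1-p^{-(m_{p}(h)+1)}}{1-p^{-1}}$ (which reduces to the previous case when $m_{p}(h)=0$), while the prime $2$ together with the fixed normalisation of $\pden{}$ yields the constant $W_{2}(h)$ and the leading factor $2$.

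Multiplying these per-prime ratios, the product over all $\eqm p34$ with $p\le n$ of the factor $1-p^{-2}$ converges, as $n\to\infty$, to $\prod_{\eqm p34}(1-p^{-2})$, which by the definition of $\b$ equals $\tfrac{1}{2\b^{2}}$; the finitely many extra factors $\frac{1-p^{-(m_{p}(h)+1)}}{1-p^{-1}}$ with $p\mid h$, together with the prime-$2$ contribution, assemble into $\mathscr{T}_{h}$ of (\ref{eq:Keating}), giving the clean identity
\[
\lim_{n\to\infty}\frac{\mathcal{M}^{(2)}(n,h)}{\pden{}^{2}}=\frac{\mathscr{T}_{h}}{\b^{2}}.
\]
Finally I would invoke (\ref{eq:mitzi constant}) in the form $\pden{}\sim y\,\lden{}=y\b/\sqrt{\log n}$, so that $\pden{}^{2}\sim y^{2}\b^{2}/\log n$, and combine it with the last display to obtain $\mathcal{M}^{(2)}(n,h)\sim y^{2}\mathscr{T}_{h}/\log n$, completing the reduction.

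I expect the main obstacle to lie in the exact bookkeeping at the ramified primes. The clean identity $\delta_{p}(h)=(1-p^{-2})(1+p^{-1})^{-2}$ for $p\nmid h$ is transparent, but verifying that the $p\mid h$ counts and, above all, the $2$-adic count reproduce \emph{precisely} the factor $2W_{2}(h)\prod_{\eqm p34,\,p\mid h}\frac{1-p^{-(m_{p}(h)+1)}}{1-p^{-1}}$ of (\ref{eq:Keating}) — rather than this quantity times a spurious constant — is the delicate step, since the normalisation hidden in the definition of $\pden{}$ must be tracked consistently through the squaring. A secondary technical point is the justification of the iterated limits (first $k\to\infty$ defining each local density, then the product over $p\le n$, then $n\to\infty$): one must check that the only non-convergent factor is the single-density tail governed by Mertens in arithmetic progressions via (\ref{eq:mitzi constant}), the remaining product $\prod_{p}(1-p^{-2})$ converging absolutely because $\sum_{p}p^{-2}<\infty$. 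Finally, it bears emphasising that the asymptotic $D_{h}(n)\sim\mathscr{T}_{h}/\log n$ for the denominator is itself the unproven Connors--Keating conjecture, so what this argument delivers is the equivalence asserted in the text rather than an unconditional evaluation of the limit.
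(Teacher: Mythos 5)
Your proposal is correct and takes essentially the same route as the paper's Section 5 derivation: both reduce the statement to the Connors--Keating conjecture by combining the local pair densities $Med^{(2)}(p,h)$ with the mean-density ratio $y(n)\rightarrow y$ of Section 4, via the per-prime identity $\frac{1-p^{-(m_{p}(h)+1)}}{1+p^{-1}}\cdot\frac{\left(1-p^{-2}\right)^{-1}}{\left(1+p^{-1}\right)^{-2}}=\frac{1-p^{-(m_{p}(h)+1)}}{1-p^{-1}}$. Your packaging of this as the unconditional asymptotic $\mathcal{M}^{(2)}(n,h)\sim y^{2}\mathscr{T}_{h}/\log n$ followed by division is an algebraic rearrangement of the paper's device of multiplying $\frac{1}{n}B_{h}(n)$ by $\left(\lden{}/(\pden{}/y(n))\right)^{2}=1$, and the equivalence you obtain is exactly the one the paper asserts.
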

Notice that according to our conjecture the ratio defined above converges
to a universal constant which does not depend on the difference $h$.
In Section \ref{sec:Representable-Pairs-In} we present numeric calculations
to support this conjecture.

\subsection{Generalization to other binary quadratic forms }

Our methods allow us to expand our observation also to integers representable
by other binary quadratic forms $x^{2}+dy^{2}$ with $d=2,3,4,7$
in addition to $d=1$, which are the sums of two squares. The reason
we examine these values of $d$ is that these are the convenient (idoneal)
numbers such that the forms $x^{2}+dy^{2}$ are of class number $1$,
see \cite{key-7} and Definition \ref{Idoneal defenition}. A surprising result
is that the ratio between the product formulas $\pden d$ we present
and the analytic results using variations on Landau's theorem $\lden d$,
for $n\rightarrow\infty$, is in fact constant for the five different
quadratic forms inspected and is again 
\begin{equation}
\lim_{n\rightarrow\infty}\frac{\pden d}{\lden d}=\frac{1}{2}\sqrt{\frac{\pi}{e^{\gamma}}}.\label{eq:ratio}
\end{equation}

We next produce conjectures analogous to (\ref{eq:Keating}) and therefore
to (\ref{eq:Hardy-LittlewoodSumsofsquares}) with $k=2$ for integers
representable by the forms at hand, and finally prove that assuming
our conjectures Gallagher's Lemma holds for $k=2$.

\subsection*{Acknowledgments }

This work is part of the author\textquoteright{}s M. Sc. thesis written
under the supervision of Zeev Rudnick at Tel-Aviv University. Partially
supported by the Israel Science Foundation (grant No. 1083/10). The author would like to thank Zeev Rudnick for his time, patience and guidance, and to the referee for helpful comments on an earlier version of this paper.

\section{Distribution In Short Intervals - Gallagher's Lemma\label{sec:Distribution-In-Small}}

We define $B_{h}(n)$ to be the number of positive integers $m\leq n$
for which both $m$ and $m+h$ can be represented as a sum of two
squares. It is conjectured that $B_{h}(n)\sim\mathscr{T}_{h}\frac{n}{\left(\sqrt{\log n}\right)^{2}}$
where by the Connors and Keating conjecture 
\[
\mathscr{T}_{h}=2W_{2}(h){\displaystyle \prod_{\begin{array}{c}
\substack{\eqm p34\\
p\mid h
}
\end{array}}}\dfrac{1-p^{-(m_{p}(h)+1)}}{1-p^{-1}}
\]
and 
\begin{eqnarray*}
W_{2}(h) & = & \begin{cases}
\begin{array}{c}
\frac{1}{4}\\
\frac{2^{m_{2}(h)+1}-3}{2^{m_{2}(h)+2}}
\end{array} & \begin{array}{c}
m_{2}(h)=0\\
m_{2}(h)\geq1
\end{array}\end{cases}
\end{eqnarray*}

In this section we prove Theorem \ref{thm:main result}, that is we
show that 
\[
\sum\limits _{1\leq d_{1}\not=d_{2}\leq H}\mathscr{T}_{\mathbf{d}}=\sum_{1\leq d_{1}\not=d_{2}\leq H}\mathscr{T}_{\left|d_{2}-d_{1}\right|}=2\sum\limits _{1\leq h\leq H-1}(H-h)\mathscr{T}_{h}=\b^{2}H^{2}+O_{\varepsilon}(H^{1+\varepsilon})
\]
for all $\varepsilon>0$.

Following Gallagher's work for primes described in the introduction,
this calculation will let us obtain the second moment for the distribution
of representable integers in the short intervals described above.

\subsection{Dirichlet series}

Set 
\[
a(h)=2\mathscr{T}_{h}=4W_{2}(h)\prod_{\begin{subarray}{c}
\eqm p34\\
p\mid h
\end{subarray}}\dfrac{1-p^{-(m_{p}(h)+1)}}{1-p^{-1}}.
\]

Notice that $a(h)$ is multiplicative: obviously $a(1)=1$ since 1
is odd and has no prime factors, and for $(m,n)=1$ we have $a(mn)=a(m)a(n)$
because our function is composed of products depending only on the
prime factorizations. 

Computing $a(p^{k})$ gives 
\[
a(p^{k})=\begin{cases}
\begin{array}{cc}
1 & p\equiv1(4)\\
2-\frac{3}{2^{k}} & p=2\\
\frac{1-\frac{1}{p^{k+1}}}{1-\frac{1}{p}} & p\equiv3(4)
\end{array}\end{cases}.
\]
We can thus write 
\begin{eqnarray*}
D(s) & = & \sum\limits _{h=1}^{\infty}a(h)h^{-s}=\prod\limits _{p}\left(1+\sum_{k=1}^{\infty}\frac{a(p^{k})}{p^{ks}}\right)\\
 & = & \left(1+\sum_{k=1}^{\infty}\frac{2-\frac{3}{2^{k}}}{2^{ks}}\right)\prod_{\eqm p14}\left(1+\frac{p^{-s}}{1-p^{-s}}\right)\prod_{\eqm p34}\left(1+\sum_{k=1}^{\infty}\frac{1-\frac{1}{p^{k+1}}}{p^{ks}\left(1-\frac{1}{p}\right)}\right)\\
 & = & R(s)P(s)Q(s)
\end{eqnarray*}
where 
\begin{eqnarray*}
R(s) & = & 1+2\frac{2^{-s}}{1-2^{-s}}-3\frac{2^{-(s+1)}}{1-2^{-(s+1)}}\\
P(s) & = & \prod\limits _{p\equiv1(4)}(1-p^{-s})^{-1}\\
Q(s) & = & \prod_{\eqm p34}\left(1+\frac{1}{1-p^{-1}}\frac{p^{-s}}{1-p^{-s}}-\frac{p^{-1}}{1-p^{-1}}\frac{p^{-(s+1)}}{1-p^{-(s+1)}}\right).
\end{eqnarray*}

\subsection{Comparison to Riemann's $\zeta$ function}

Taking $\zeta(s)=\prod\limits _{p}\left(1-p^{-s}\right)^{-1}$, we
will now show that $\frac{D(s)}{\zeta(s)}$ is analytic for $\sigma>0$
where $s=\sigma+it$, thus $D(s)$ is analytic in that region with
a simple pole at $s=1$. 
\[
\frac{D(s)}{\zeta(s)}=\frac{1+2\frac{2^{-s}}{1-2^{-s}}-3\frac{2^{-(s+1)}}{1-2^{-(s+1)}}}{\left(1-2^{-s}\right)^{-1}}\cdot\prod_{\eqm p34}\frac{1+\frac{1}{1-p^{-1}}\frac{p^{-s}}{1-p^{-s}}-\frac{p^{-1}}{1-p^{-1}}\frac{p^{-(s+1)}}{1-p^{-(s+1)}}}{\left(1-p^{-s}\right)^{-1}}.
\]

The first expression turns out to be 
\[
\frac{R(s)}{\left(1-2^{-s}\right)^{-1}}=1-2^{-s}+2\frac{2^{-s}-2^{-2s}}{1-2^{-s}}-3\frac{2^{-(s+1)}-2^{-(2s+1)}}{1-2^{-(s+1)}}
\]
which is clearly analytic in the desired region. 

The second expression is 
\begin{eqnarray*}
\frac{Q(s)}{\prod\limits _{\eqm p34}\left(1-p^{-s}\right)^{-1}} & = & {\displaystyle \prod_{\eqm p34}}\left(1-p^{-s}+\frac{p^{-s}}{1-p^{-1}}-\frac{p^{-(s+2)}-p^{-(2s+2)}}{\left(1-p^{-1}\right)\left(1-p^{-(s+1)}\right)}\right).
\end{eqnarray*}
Notice that 
\begin{eqnarray*}
1-p^{-s}+\frac{p^{-s}}{1-p^{-1}}-\frac{p^{-(s+2)}-p^{-(2s+2)}}{\left(1-p^{-1}\right)\left(1-p^{-(s+1)}\right)} & = & 1+O\left(\frac{1}{p^{\sigma+1}}\right)
\end{eqnarray*}
and so the product
\[
\frac{Q(s)}{\prod\limits _{\eqm p34}\left(1-p^{-s}\right)^{-1}}={\displaystyle \prod_{\eqm p34}\left(1+O\left(\frac{1}{p^{\sigma+1}}\right)\right)}
\]
converges in the desired region $\sigma>0$ in which it is analytic,
implying that $\frac{D(s)}{\zeta(s)}$ is also analytic there. 

Let $A(s)$ be an analytic function in $\sigma>0$ defined by $D(s)=A(s)\zeta(s)$.
Since $\Res 1{\zeta(s)=1}$, in order to compute $\Res 1{D(s)}$ we
can simply compute $A(1)$ and so 
\[
\Res 1{D(s)}=A(1)=\prod_{\eqm p34}\frac{1}{1-\frac{1}{p^{2}}}=2\b^{2}.
\]

\subsection{Proof of Theorem \ref{thm:main result}}

We write $D(s)=\zeta(s)A(s)$ where $A(s)$ is absolutely convergent
for $\sigma>0$ and so is bounded. We need the following version of
Perron's formula (see for example \cite{key-13}): If 
\[
D(s)=\sum_{n=1}^{\infty}a(h)h^{-s}
\]
 is absolutely convergent for $\mbox{\ensuremath{\sigma}}>1$, then
\[
\sum_{1\leq h\leq H}a(h)(H-h)=\frac{1}{2\pi i}\int\limits _{2-i\infty}^{2+i\infty}\frac{D(s)}{s(s+1)}H^{s+1}ds.
\]

Applying Perron's formula in our case, we are left with evaluating
the contour integral. We want to shift the contour of integration
to $\sigma=\sigma_{0}$, with $0<\sigma_{0}<1$, and so we need to
bound $D(s)$ in this region. First notice that $|A(\sigma+it)|\leq C(\sigma)$
is bounded as it is given by an absolutely convergent product in $\sigma>0$.
In order to bound $\zeta(s)$ we use the classical convexity bound
(see \cite[Chap. II.3]{key-13}) 
\[
|\zeta(\sigma+it)|\ll_{\varepsilon}(1+|t|)^{\frac{1-\sigma}{2}+\varepsilon},\quad0\leq\sigma\leq1,\,\left|t\right|>1.
\]
for all $\varepsilon>0$. Hence the integrand is bounded by 
\begin{equation}
\left|\frac{D(s)}{s(s+1)}H^{s+1}\right|\ll_{\sigma}\begin{cases}
H^{\sigma+1}, & |t|\leq1,\,0<\sigma<1\,\,\,(a)\\
H^{\sigma+1}|t|^{-2+\frac{1-\sigma}{2}+\varepsilon}, & |t|>1,\,0<\sigma\leq1\,\,\,(b)
\end{cases}\label{bound on D}
\end{equation}
and so by shifting contour using bound $(b)$ and picking up a residue
from the simple pole of $\zeta(s)$ at $s=1$ (recall that $\mbox{Res}_{s=1}\zeta(s)=1$)
we have 
\[
\sum_{1\leq h\leq H}a(h)(H-h)=\frac{A(1)}{2}H^{2}+\frac{1}{2\pi i}\int\limits _{\sigma_{0}-i\infty}^{\sigma_{0}+i\infty}\frac{D(s)}{s(s+1)}H^{s+1}ds.
\]

Applying the bounds $(a)$ and $(b)$ in (\ref{bound on D}) allows
us to bound the integral by $O(H^{\sigma_{0}+1})$. In conclusion
we find 
\[
\sum_{1\leq h\leq H}a(h)(H-h)=\beta^{2}H^{2}+O_{\varepsilon}(H^{1+\varepsilon})
\]
for all $\varepsilon>0$. Therefore 
\[
\sum\limits _{1\leq d_{1}\not=d_{2}\leq H}\mathscr{T}_{\mathbf{d}}=\sum_{1\leq h\leq H-1}(H-h)a(h)=\beta^{2}H^{2}+O_{\varepsilon}(H^{1+\varepsilon})
\]
which is effectively Gallagher's Lemma for sums of two squares and
$k=2$.

\section{Sums of Squares in Residue Rings\label{sec: Modulo rings} }

Following Keating and Connors we attempt to produce a $2-$tuple conjecture
using essentially heuristic methods and Landau's theorem. The key
step is to reduce our problem to prime power residue rings, a step
which is made possible by Lemma \ref{lem:An-integer-is} presented
bellow.

\subsection{Representable elements in residue rings}
\begin{prop}
\label{prop:elements in residue rings}Denote by $Sq(p,k)$ the set
of elements representable as a sum of two squares in $\nicefrac{\mathbb{Z}}{p^{k}\mathbb{Z}}$
\[
Sq(p,k)=\left\{ a\in\nicefrac{\mathbb{Z}}{p^{k}\mathbb{Z}}|\, a\mbox{ representable as a sum of two squares}\right\} .
\]
The following holds:

$(a)$ For $\eqm p14$, $Sq(p,k)=\nicefrac{\mathbb{Z}}{p^{k}\mathbb{Z}}$
.

$(b)$ For $\eqm p34$, $Sq(p,k)=\left\{ a\in\nicefrac{\mathbb{Z}}{p^{k}\mathbb{Z}}|\, m_{p}(a)\mbox{ is even or \ensuremath{a=0}}\right\} $.

$(c)$ For $p=2$, $Sq(2,k)=\left\{ a\in\nicefrac{\mathbb{Z}}{2^{k}\mathbb{Z}}|\, a=2^{j}(1+4n),0\leq j\leq k-1\mbox{ or \ensuremath{a=0}}\right\} $.
\end{prop}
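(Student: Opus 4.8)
The plan is to characterize, for each prime $p$, exactly which residue classes in $\nicefrac{\mathbb{Z}}{p^{k}\mathbb{Z}}$ are sums of two squares. The natural tool is Proposition (or Lemma) \ref{lem:An-integer-is} alluded to in the text: a positive integer is representable as a sum of two squares if and only if every prime $q\equiv 3\,(4)$ occurs to an even power in its factorization. Since representability in the ring $\nicefrac{\mathbb{Z}}{p^{k}\mathbb{Z}}$ means that the congruence $x^2+y^2\equiv a\,(p^k)$ is solvable, I would first reduce all three parts to counting solvability of this single quadratic congruence, and then handle the three residue classes of $p$ separately.

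For part $(a)$, where $p\equiv 1\,(4)$, the key fact is that $-1$ is a quadratic residue mod $p$, hence (by Hensel lifting, since $p$ is odd and $2$ is a unit) mod $p^k$. Thus I can write $-1\equiv i^2\,(p^k)$ for some unit $i$, and then $x^2+y^2=(x+iy)(x-iy)$ factors over $\nicefrac{\mathbb{Z}}{p^{k}\mathbb{Z}}$. Setting $u=x+iy,\ v=x-iy$ turns the problem into solving $uv\equiv a\,(p^k)$, which is solvable for every $a$ (take $v=1,\ u=a$, then recover $x,y$ since $2$ and $i$ are units). This shows $Sq(p,k)=\nicefrac{\mathbb{Z}}{p^{k}\mathbb{Z}}$. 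For part $(b)$, where $p\equiv 3\,(4)$, the crucial input is that $-1$ is a nonresidue, so $x^2+y^2\equiv 0\,(p)$ forces $p\mid x$ and $p\mid y$; I would leverage this to run an induction on $m_p(a)$, showing that if $p\mid a$ then $p^2\mid a$ is forced for any representation, which extracts factors of $p^2$ and reduces to the unit case. Concretely: a unit $a$ is a sum of two squares mod $p^k$ iff it is mod $p$, and mod $p$ the sums of two squares are all of $\nicefrac{\mathbb{Z}}{p\mathbb{Z}}$ by a counting/pigeonhole argument; combined with the even-power extraction this yields exactly the stated set $\{a : m_p(a)\text{ even or }a=0\}$.

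Part $(c)$, the prime $2$, is where the genuine work lies and is the main obstacle, because $2$ ramifies and Hensel's lemma fails at the even prime. Here I would argue directly: a unit $a$ is a sum of two squares mod $2^k$ precisely when $a\equiv 1\,(4)$, which one checks by noting that squares mod $8$ lie in $\{0,1,4\}$, so $x^2+y^2$ ranges over $\{0,1,2,4,5\}\pmod 8$ and the odd values are $\equiv 1\,(4)$; then a lifting argument shows every $a\equiv 1\,(4)$ is actually attained mod $2^k$. For general $a=2^j m$ with $m$ odd, I would extract the factor $2^j$ by observing that $x^2+y^2\equiv 0\,(2)$ forces $x\equiv y\,(2)$, so $x^2+y^2$ is either odd or divisible by $2$ with the quotient again a sum of two squares, giving $2^j(1+4n)$ with the unit part $\equiv 1\,(4)$. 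Assembling the three cases gives the proposition.

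The step I expect to be most delicate is the explicit $2$-adic lifting in $(c)$: verifying not merely the mod-$8$ obstruction but that every admissible residue is genuinely representable all the way up to $2^k$, and correctly tracking the parity/valuation bookkeeping $a=2^j(1+4n)$. The odd-prime cases $(a)$ and $(b)$ should be routine once Hensel's lemma and the residue/nonresidue status of $-1$ are invoked.
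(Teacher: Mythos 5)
Your proposal is correct, but there is no proof in the paper to compare it against: the paper explicitly defers the proof of this proposition to the longer version \cite{key-11} and to Lemma A.2 of \cite{key-12}, so your argument has to stand on its own --- and it does. For $p\equiv 1\pmod 4$, lifting $-1$ to a square modulo $p^{k}$ by Hensel and factoring $x^{2}+y^{2}=(x+iy)(x-iy)$ with $2$ and $i$ units indeed represents every residue. For $p\equiv 3\pmod 4$, the nonresiduacity of $-1$ forces $p\mid x$ and $p\mid y$ whenever $p\mid x^{2}+y^{2}$, which drives your descent ruling out odd valuation (the descent terminates correctly because $m_{p}(a)\leq k-1$ guarantees two spare powers of $p$ at the final step), while the pigeonhole count $\#\{x^{2}\}+\#\{a-y^{2}\}=p+1>p$ plus Hensel settles units, and even powers of $p$ are absorbed into the squares. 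For $p=2$, the obstruction is already visible mod $4$ (squares are $0,1$, so odd sums of two squares are $\equiv 1\pmod 4$), the identity $2(x^{2}+y^{2})=(x+y)^{2}+(x-y)^{2}$ runs the valuation up and down, and the lifting you flagged as delicate goes through cleanly via the standard fact that odd squares modulo $2^{k}$ are exactly the residues $\equiv 1\pmod 8$: then $a\equiv 1\pmod 8$ is $x^{2}+0^{2}$ and $a\equiv 5\pmod 8$ is $x^{2}+2^{2}$ with $x^{2}\equiv a-4\pmod{2^{k}}$. The one bookkeeping edge case is $j=k-1$, where the descent leaves the odd part constrained only modulo $2$, so no mod-$4$ condition survives; consistently, the ring element $2^{k-1}$ is both of the stated form (take $n=0$) and genuinely representable, so the characterization is unharmed. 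Finally, a caution about framing rather than substance: you open by calling Lemma \ref{lem:An-integer-is} your ``natural tool,'' but in the paper that lemma is \emph{deduced from} this proposition, so invoking it here would be circular; what you actually quote is Fermat's global two-square theorem, and in any event your three case analyses are purely local (Hensel, counting, $2$-adic descent) and never use either statement --- which is exactly as it should be.
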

Detailed proofs for Proposition \ref{prop:elements in residue rings}
and the other propositions presented in this section can be found
in \cite{key-11}, and they can also be deduced from Lemma A.2 in
\cite{key-12}.
\begin{lem}
\label{lem:An-integer-is}An integer is representable as a sum of
two squares if and only if it is representable as a sum of two squares
in $\nicefrac{\mathbb{Z}}{p^{k}\mathbb{Z}}$ for every prime $p$
and integer $k\in\mathbb{N}$.\end{lem}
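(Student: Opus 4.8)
The plan is to prove both directions of the biconditional in Lemma \ref{lem:An-integer-is}, using the classical characterization of sums of two squares together with the structural description of representable residues given in Proposition \ref{prop:elements in residue rings}. Recall that an integer $N$ is representable as a sum of two squares if and only if every prime $\eqm p34$ dividing $N$ appears to an even power in its prime factorization. This classical fact will be the backbone of the argument, and I will take it as known (it follows from the theory of the Gaussian integers, or equivalently from the multiplicativity of the representation counting function).

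The forward direction is essentially trivial: if $N=x^{2}+y^{2}$ over $\mathbb{Z}$, then reducing this identity modulo $p^{k}$ for any prime $p$ and any $k\in\mathbb{N}$ exhibits $N$ as a sum of two squares in $\nicefrac{\mathbb{Z}}{p^{k}\mathbb{Z}}$. So the only content lies in the converse, where I must deduce global representability from the hypothesis of representability in every prime power residue ring. For this I would argue by contraposition: suppose $N$ is \emph{not} representable as a sum of two squares. By the classical characterization there is a prime $q\equiv3\,(4)$ with $m_{q}(N)$ odd. The goal is then to exhibit a single modulus $q^{k}$ for which $N$ is not representable in $\nicefrac{\mathbb{Z}}{q^{k}\mathbb{Z}}$.

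The key step is the choice of $k$ and the application of part $(b)$ of Proposition \ref{prop:elements in residue rings}. Writing $j=m_{q}(N)$ (odd) and taking $k=j+1$, the reduction of $N$ modulo $q^{k}$ is a nonzero element whose $q$-adic valuation in the ring $\nicefrac{\mathbb{Z}}{q^{k}\mathbb{Z}}$ equals $j$, which is odd. Part $(b)$ states that the representable elements modulo $q^{k}$ for $\eqm q34$ are exactly those that are zero or have even $q$-adic valuation; since the image of $N$ is nonzero and has odd valuation $j<k$, it fails to lie in $Sq(q,k)$. Hence $N$ is not representable modulo $q^{k}$, completing the contrapositive and thus the converse. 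I should be slightly careful to confirm that the notion of $m_{q}(\cdot)$ used in Proposition \ref{prop:elements in residue rings}$(b)$ for a residue coincides with the valuation of the integer representative when that valuation is strictly less than $k$, so that the reduction genuinely has odd valuation inside the ring rather than collapsing to $0$; choosing $k=j+1$ guarantees exactly this.

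The main obstacle is not any single hard estimate but rather ensuring that the local-to-global passage is watertight, in particular that representability in \emph{every} prime power ring really does force the even-valuation condition at \emph{every} bad prime simultaneously. Parts $(a)$ and $(c)$ show that primes $\eqm p14$ and the prime $2$ impose no obstruction beyond what is automatically satisfied, so the entire weight of the converse rests on part $(b)$ applied prime-by-prime; the contrapositive formulation isolates a single offending prime and makes this clean. Once the valuation bookkeeping is pinned down, the lemma follows immediately from Proposition \ref{prop:elements in residue rings} and the classical two-squares theorem.
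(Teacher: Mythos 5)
Your proposal is correct and takes essentially the same route as the paper: the forward direction by reducing the identity $N=x^{2}+y^{2}$ modulo $p^{k}$, and the converse by contraposition via the classical two-squares theorem together with part $(b)$ of Proposition \ref{prop:elements in residue rings}. Your valuation bookkeeping with the choice $k=m_{q}(N)+1$ is in fact slightly more careful than the paper's own wording, which claims non-representability for all $k\geq m_{p}(a)$ even though at $k=m_{p}(a)$ the residue collapses to $0$, which \emph{is} representable; the correct threshold is $k>m_{p}(a)$, exactly as you chose.
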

\begin{proof}
This is a corollary of Proposition \ref{prop:elements in residue rings}
and of the famous classical result that an integer $n$ is as sum
of two squares if and only if $m_{p}(a)$ is even for all $\eqm p34$
. Say $a=x^{2}+y^{2}$, so obviously $\eqm a{x^{2}+y^{2}}{p^{k}}$.
Conversely assume $a$ is not representable hence $m_{p}(a)$ is odd
for some $\eqm p34$ and so $a$ is not representable in $\nicefrac{\mathbb{Z}}{p^{k}\mathbb{Z}}$
for $k\geq m_{p}(a)$. 
\end{proof}
Equipped with this lemma we shall examine $\nicefrac{\mathbb{Z}}{p^{k}\mathbb{Z}}$
for all primes $p$ and $k\in\mathbb{N}$, and determine which are
the representable elements in these residue rings. This will allow
us to give an expression for the the density of representable elements,
and then of representable pairs.

\subsection{Mean density of representable elements in residue rings.}

We now wish to calculate the densities of representable elements in
$\nicefrac{\mathbb{Z}}{p^{k}\mathbb{Z}}$ for all primes, $k\rightarrow\infty$.
The following propositions provide a method for deriving these limits,
and present ideas which can be useful also for calculating correlations
of higher degrees. 
\begin{prop}
Denote by $Med(p)$ the limit of the mean density of representable
elements in $\nicefrac{\mathbb{Z}}{p^{k}\mathbb{Z}}$ as $k\rightarrow\infty$
\[
Med(p)=\lim_{k\rightarrow\infty}\frac{\#Sq(p,k)}{p^{k}}.
\]
The following holds:

$(a)$ For $\eqm p14$, $Med(p)=1$.

$(b)$ For $\eqm p34$, $Med(p)=\left(1+p^{-1}\right)^{-1}$.

$(c)$ For $p=2$, $Med(2)=\frac{1}{2}$.
\end{prop}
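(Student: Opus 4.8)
The plan is to invoke Proposition \ref{prop:elements in residue rings}, which gives an explicit description of $Sq(p,k)$ in each of the three cases, and then to compute $\#Sq(p,k)$ directly by stratifying $\nicefrac{\mathbb{Z}}{p^{k}\mathbb{Z}}$ according to the exponent $m_{p}(a)$. The number of $a$ with $m_{p}(a)=j$ for $0\le j\le k-1$ is $p^{k-j}-p^{k-j-1}=p^{k-j-1}(p-1)$, since these are exactly the multiples of $p^{j}$ that are not multiples of $p^{j+1}$. This elementary count is the common engine for all three parts, and each limit will emerge from a geometric series in $p^{-1}$.

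For $\eqm p14$, part $(a)$ gives $Sq(p,k)=\nicefrac{\mathbb{Z}}{p^{k}\mathbb{Z}}$, so $\#Sq(p,k)=p^{k}$ and the density equals $1$ for every $k$; the limit is immediate. For $\eqm p34$, part $(b)$ identifies $Sq(p,k)$ with the elements of even exponent together with $0$, so I would sum the valuation counts over even $j$ only, obtaining
\[
\frac{\#Sq(p,k)}{p^{k}}=\frac{1}{p^{k}}+\frac{p-1}{p}\sum_{\substack{0\le j\le k-1\\ 2\mid j}}p^{-j},
\]
and let $k\to\infty$. The surviving even geometric series sums to $(1-p^{-2})^{-1}$, and simplifying $\frac{p-1}{p}(1-p^{-2})^{-1}$ reduces to $(1+p^{-1})^{-1}$, as claimed.

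The case $p=2$ from part $(c)$ is where I expect the only genuine bookkeeping subtlety to arise. Here $Sq(2,k)$ consists of $0$ together with the elements $2^{j}u$ with $0\le j\le k-1$ whose odd part satisfies $\eqm u14$. Fixing $j$, the odd part $u$ ranges over the $\varphi(2^{k-j})=2^{k-j-1}$ odd residues modulo $2^{k-j}$, of which exactly half are $\equiv1\,(4)$ when $k-j\ge2$, giving $2^{k-j-2}$ admissible elements; the boundary case $j=k-1$ must be treated separately, since there $u$ is determined only modulo $2$ and contributes a single element. Summing, the nonzero admissible elements number $\sum_{j=0}^{k-2}2^{k-j-2}+1=2^{k-1}$, whence $\#Sq(2,k)=1+2^{k-1}$ and the density $\tfrac{1}{2}+2^{-k}\to\tfrac{1}{2}$. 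The main obstacle, such as it is, is thus purely combinatorial: pinning down the boundary term $j=k-1$ and verifying that the count ``exactly half of the odd residues are $\equiv1\,(4)$'' is precisely what fails there. Once this is settled, all three limits follow by elementary geometric summation.
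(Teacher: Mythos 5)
Your proof is correct: all three counts check out, including the only delicate point, the boundary case $j=k-1$ for $p=2$, where $2^{k-1}=2^{k-1}(1+4\cdot 0)$ is indeed representable and contributes the single extra element needed to make $\#Sq(2,k)=2^{k-1}+1$. The paper itself states this proposition without proof (deferring the details to \cite{key-11}), and your derivation---stratifying $\nicefrac{\mathbb{Z}}{p^{k}\mathbb{Z}}$ by the valuation $m_{p}(a)$ using Proposition \ref{prop:elements in residue rings} and summing the resulting geometric series---is exactly the intended route.
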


\section{Ratio Between the Product of Densities and Landau's Result\label{sec:Ratio-Between-the-densities}}

\subsection{Density of integers representable as a sum of two squares}

We wish to calculate the mean density of integers representable as
a sum of two squares, so following our approach we take the product
of all the above densities for $p\leq n$: 
\begin{equation}
\pden{}:=\prod_{p\leq n}Med(p)=\frac{1}{2}\prod_{\begin{array}{c}
\substack{\eqm p34\\
p\leq n
}
\end{array}}\left(1+p^{-1}\right)^{-1}.\label{eq:mean density d=00003D1}
\end{equation}

Even though we do not expect that this expression $\pden{}$ will
give us the correct asymptotics, we will show that as for the case
of the primes this Mertens-type product provides the correct answer
up to some constant, and this constant will show a universal property
we will see in Section \ref{sec:Generalization-to-Other}. The leading
term in Landau's analytic expression for the mean density of representable
integers is 
\[
\lden{}=\frac{\b}{\sqrt{\log n}}.
\]

The events that an integer is representable in residue rings associated
with different primes show some dependency, a dependency which gives
rise to a term $y(n)$. Taking this term into consideration we should
have 
\[
\lden{}\sim\frac{\pden{}}{y(n)}.
\]

\subsection{The Ratio }

Mertens' original formula states that 
\[
\prod_{p\leq n}\left(1-p^{-1}\right)=\frac{e^{-\gamma}}{\log n}+O\left(\frac{1}{\log^{2}n}\right)
\]
where $\gamma$ denotes Euler's constant. 

For co-prime integers $a,q$ , Languasco and Zaccagnini show \cite{key-6}
a generalization of Mertens' formula

\small

\begin{eqnarray}
 & {\displaystyle \lim_{n\rightarrow\infty}\left(\log n\right)^{1/\varphi(q)}\prod_{\begin{array}{c}
\substack{\eqm paq\\
p\leq n
}
\end{array}}\left(1-p^{-1}\right)}={\displaystyle \left[e^{-\gamma}\prod_{p}\left(1-p^{-1}\right)^{\alpha(p;a,q)}\right]^{1/\varphi(q)}}\label{eq:Mertens}
\end{eqnarray}
 \normalsize where $\varphi$ is Euler's totient function, and $a(p;a,q)$
is given by
\[
a(p;a,q)=\begin{cases}
\varphi(q)-1 & ,\,\eqm paq\\
-1 & ,\,\mbox{otherwise}
\end{cases}
\]

\begin{thm}
Let $y(n)=\frac{\pden{}}{\lden{}}$ be the ratio between the product
of densities in prime power residue rings and Landau's leading term.
Then $y(n)$ converges as $n$ tends to infinity and the limit is
given by 
\[
y:=\lim_{n\rightarrow\infty}y(n)=\lim_{n\rightarrow\infty}\frac{\pden{}}{\lden{}}=\frac{1}{2}\sqrt{\frac{\pi}{e^{\gamma}}}.
\]
\end{thm}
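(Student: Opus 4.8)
The plan is to reduce everything to the generalized Mertens formula (\ref{eq:Mertens}) for the progression $3\pmod 4$, together with the classical evaluation $L(1,\chi)=\frac{\pi}{4}$ for the nonprincipal Dirichlet character $\chi$ modulo $4$. Writing out the ratio with Landau's constant $\b=\frac{1}{\sqrt 2}\prod_{p\equiv3(4)}(1-p^{-2})^{-1/2}$, I would first record that
\[
y(n)=\frac{\pden{}}{\lden{}}=\frac{\sqrt{\log n}}{2\b}\prod_{\substack{p\equiv3(4)\\ p\le n}}\left(1+p^{-1}\right)^{-1}.
\]
The whole difficulty is therefore to pin down the asymptotics of the partial product over $p\equiv3(4)$ as $n\to\infty$.

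The first step is to factor each local term as $(1+p^{-1})^{-1}=(1-p^{-1})(1-p^{-2})^{-1}$ and split the product. The factor $\prod_{p\equiv3(4),\,p\le n}(1-p^{-2})^{-1}$ converges absolutely to $\prod_{p\equiv3(4)}(1-p^{-2})^{-1}=2\b^2$ by the definition of $\b$, and contributes nothing to the $\sqrt{\log n}$ growth. The remaining factor $\prod_{p\equiv3(4),\,p\le n}(1-p^{-1})$ is exactly the object controlled by (\ref{eq:Mertens}) with $q=4$, $a=3$, $\varphi(4)=2$, which gives
\[
\prod_{\substack{p\equiv3(4)\\ p\le n}}(1-p^{-1})\sim\frac{1}{\sqrt{\log n}}\left[e^{-\gamma}\prod_p(1-p^{-1})^{\alpha(p;3,4)}\right]^{1/2}.
\]
Substituting both pieces, the $\sqrt{\log n}$ cancels and I am left with
\[
y=\b\left[e^{-\gamma}\prod_p(1-p^{-1})^{\alpha(p;3,4)}\right]^{1/2},\qquad\text{equivalently}\qquad y^2=e^{-\gamma}\,\b^2\prod_p(1-p^{-1})^{\alpha(p;3,4)}.
\]

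The main obstacle is the evaluation of the singular product $\prod_p(1-p^{-1})^{\alpha(p;3,4)}$, and this is where $L(1,\chi)$ enters. From the definition of $\alpha(p;3,4)$ one checks that $\alpha(p;3,4)=-\chi(p)$ for every odd prime (since $\chi(p)=1$ for $p\equiv1(4)$ and $\chi(p)=-1$ for $p\equiv3(4)$), while the $p=2$ factor is $(1-2^{-1})^{-1}=2$; hence $\prod_p(1-p^{-1})^{\alpha(p;3,4)}=2\prod_p(1-p^{-1})^{-\chi(p)}$. Comparing this termwise with the Euler product $L(1,\chi)=\prod_p(1-\chi(p)p^{-1})^{-1}$, the factor at each $p\equiv3(4)$ contributes $(1+p^{-1})(1-p^{-1})=1-p^{-2}$ and every other factor is trivial, so the quotient telescopes to $\prod_{p\equiv3(4)}(1-p^{-2})$. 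This yields
\[
\prod_p(1-p^{-1})^{\alpha(p;3,4)}=2\,L(1,\chi)\prod_{p\equiv3(4)}(1-p^{-2})=\frac{L(1,\chi)}{\b^2},
\]
using $\prod_{p\equiv3(4)}(1-p^{-2})=(2\b^2)^{-1}$ once more.

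Feeding this back gives $y^2=e^{-\gamma}L(1,\chi)$, and the Leibniz evaluation $L(1,\chi)=\frac{\pi}{4}$ produces $y^2=\frac{\pi}{4e^{\gamma}}$, i.e. $y=\frac12\sqrt{\pi/e^{\gamma}}$ as claimed. The points demanding care are the convergence of the singular product (guaranteed by the convergence of $\sum_p\chi(p)/p$, coming from $L(s,\chi)$ being holomorphic and nonvanishing at $s=1$) and the justification that the absolutely convergent factors $(1-p^{-2})^{-1}$ may be detached from the divergent part without affecting the limit; both are routine but should be stated to make the passage to the limit rigorous.
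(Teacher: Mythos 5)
Your proof is correct and takes essentially the same route as the paper: both rest on the Languasco--Zaccagnini Mertens formula with $a=3$, $q=4$, the factorization $(1+p^{-1})^{-1}=(1-p^{-1})(1-p^{-2})^{-1}$ combined with the definition of $\b$, and the evaluation $L(1)=\frac{\pi}{4}$ for the nonprincipal character modulo $4$. The only difference is bookkeeping: you square the ratio and evaluate the singular product $\prod_p(1-p^{-1})^{\alpha(p;3,4)}$ by telescoping against the Euler product of $L(s)$, whereas the paper carries the square roots throughout and recognizes the surviving products directly as $\sqrt{L(1)}$.
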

\begin{proof}
Plugging $a=3,\, q=4$ in Mertens' formula for primes in arithmetic
progression we have 
\[
\prod_{\begin{array}{c}
\substack{\eqm p34\\
p\leq n
}
\end{array}}\left(1-p^{-1}\right)\sim\frac{e^{-\gamma/2}}{\sqrt{\log n}}\left[\frac{\prod\limits _{\eqm p34}\left(1-p^{-1}\right)}{\left(1-2^{-1}\right)\prod\limits _{\eqm p14}\left(1-p^{-1}\right)}\right]^{1/2}
\]
and since 
\[
\left(1+p^{-1}\right)^{-1}=\frac{1-p^{-1}}{1-p^{-2}}
\]
we arrive at

\small
\[
\prod_{\begin{array}{c}
\substack{\eqm p34\\
p\leq n
}
\end{array}}\left(1+p^{-1}\right)^{-1}\sim\frac{\sqrt{2}e^{-\gamma/2}}{\sqrt{\log n}}\prod\limits _{\eqm p34}\left(1-p^{-1}\right)^{-\frac{1}{2}}\left(1+p^{-1}\right)^{-1}\prod\limits _{\eqm p14}\left(1-p^{-1}\right)^{-\frac{1}{2}}.
\]

\normalsize

We are interested in the ratio 
\[
\lim_{n\rightarrow\infty}\frac{\pden{}}{\lden{}}=\lim_{n\rightarrow\infty}\frac{\pden{}}{\b/\sqrt{\log n}}
\]
with $\b$ the Landau-Ramanujan constant given by 
\[
\b=\frac{1}{\sqrt{2}}\prod\limits _{\eqm p34}\left(1-p^{-2}\right)^{-1/2}
\]
and so 
\[
\lim_{n\rightarrow\infty}\frac{\pden{}}{\lden{}}=\frac{1}{2}\cdot2e^{-\gamma/2}\prod\limits _{\eqm p34}\left(1+p^{-1}\right)^{-1/2}\prod\limits _{\eqm p14}\left(1-p^{-1}\right)^{-1/2}.
\]

The two products are exactly $\sqrt{L(1)}$ which is calculated in
\cite{key-9}, where $L(s)$ is the Dirichlet series for the non principal
character modulo $4$. Therefore 
\[
y=\lim_{n\rightarrow\infty}\frac{\pden{}}{\lden{}}=e^{-\gamma/2}\sqrt{\frac{\pi}{4}}=\frac{1}{2}\sqrt{\frac{\pi}{e^{\gamma}}}.
\]

\end{proof}
This is quite an elegant result, which can be easily generalized using
similar tools as will be done in section 6.

\section{Representable Pairs In Residue Rings And Their Densities\label{sec:Representable-Pairs-In}}

We are now in a position to look at the distribution of representable
pairs $a,a+h$ in $\nicefrac{\mathbb{Z}}{p^{k}\mathbb{Z}}$. The following
proposition states the densities of representable pairs, for a detailed
proof see \cite{key-11}.
\begin{prop}
Denote by $Med^{(2)}(p,h)$ the limit of the mean density of representable
pairs $\left(a,a+h\right)$ in $\nicefrac{\mathbb{Z}}{p^{k}\mathbb{Z}}$
as $k\rightarrow\infty$ 
\[
Med^{(2)}(p,h)=\lim_{k\rightarrow\infty}\frac{\#\left\{ a,a+h\in Sq(p,k)\right\} }{p^{k}}.
\]
The following holds:

$(a)$ For $\eqm p14$, $Med^{(2)}(p,h)=1$.

$(b)$ For $\eqm p34$, $Med^{(2)}(p,h)=\dfrac{1-p^{-(m_{p}(h)+1)}}{1+p^{-1}}$
.

$(c)$ For $p=2$, $Med^{(2)}(2,h)=W_{2}\left(h\right)=\begin{cases}
\begin{array}{c}
\frac{1}{4}\\
\frac{2^{m_{2}(h)+1}-3}{2^{m_{2}(h)+2}}
\end{array} & \begin{array}{c}
m_{2}(h)=0\\
m_{2}(h)\geq1
\end{array}\end{cases}$ .
\end{prop}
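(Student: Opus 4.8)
The plan is to read each limiting density as a Haar measure in the ring of $p$-adic integers $\mathbb{Z}_{p}$ equipped with its normalized measure $\mu$. The quantity $\frac{\#\{a,a+h\in Sq(p,k)\}}{p^{k}}$ is precisely the $\mu$-measure of the cosets of $p^{k}\mathbb{Z}_{p}$ whose representatives have both $a$ and $a+h$ reducing into $Sq(p,k)$; by the characterizations of Proposition \ref{prop:elements in residue rings} the indicator of this event converges pointwise (off the measure-zero locus $a=0$, $a+h=0$) to the indicator of the condition that $a$ and $a+h$ are both representable $p$-adically, so by bounded convergence $Med^{(2)}(p,h)$ equals the measure of that limiting event. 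Case $(a)$ is then immediate, since for $\eqm p14$ the whole ring is representable and every pair qualifies. For $(b)$ and $(c)$ I set $v=m_{p}(h)$ and partition $\mathbb{Z}_{p}$ by whether $m_{p}(a)$ is less than, equal to, or greater than $v$; the organizing principle is that $m_{p}(a+h)=\min\{m_{p}(a),v\}$ except on the boundary $m_{p}(a)=v$, where the valuation of $a+h$ strictly increases.

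For case $(b)$, $\eqm p34$, representability of an element is equivalent to its valuation being even. On $m_{p}(a)<v$ the sum $a+h$ has the same valuation as $a$, so both are representable exactly when $m_{p}(a)$ is even, giving a truncated geometric series in $p^{-2}$. On $m_{p}(a)>v$ one has $m_{p}(a+h)=v$, so a contribution survives only when $v$ is even, and then one also needs $m_{p}(a)$ even, again a geometric series over the even valuations exceeding $v$. The delicate regime is $m_{p}(a)=v$: writing $a=p^{v}u$ and $h=p^{v}w$ with $u,w$ units gives $a+h=p^{v}(u+w)$, so representability of $a+h$ is governed by the parity of $m_{p}(u+w)$. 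The key computation is the law of $m_{p}(u+w)$ for Haar-random unit $u$ and fixed unit $w$: since $u+w$ is equidistributed over those $x\in\mathbb{Z}_{p}$ with $x-w$ a unit, one finds $m_{p}(u+w)=0$ with probability $\tfrac{p-2}{p-1}$ and $m_{p}(u+w)=n$ with probability $p^{-n}$ for each $n\ge1$. Assembling the three regimes and simplifying the geometric sums yields $\frac{1-p^{-(m_{p}(h)+1)}}{1+p^{-1}}$; a parallel but simpler bookkeeping (only the regime $m_{p}(a)<v$ survives) confirms the identical closed form when $v$ is odd.

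Case $(c)$, $p=2$, is the most involved, because representability is no longer a condition on the parity of the valuation but on the odd part of $a$ being $\eqm{}14$. The same three-regime partition applies, now with two carry effects modulo $4$. For $m_{2}(a)\le v-2$ the odd parts of $a$ and $a+h$ agree modulo $4$, so both are representable iff the odd part of $a$ is $\eqm{}14$; but exactly at $m_{2}(a)=v-1$ (and symmetrically at $m_{2}(a)=v+1$) the extra term translates the odd part by $2$ modulo $4$, which flips the congruence class and either annihilates or redirects the contribution. On the boundary $m_{2}(a)=v$, writing $a=2^{v}u$, $h=2^{v}w$ with $u,w$ odd, I must evaluate the joint probability that $u$ is $\eqm{}14$ and the odd part of $u+w$ is $\eqm{}14$; splitting on $w$ modulo $4$, a short computation modulo $8$ (when $w\equiv1$) together with $2$-adic scaling invariance (when $w\equiv3$, where $u+w$ becomes $4$ times a uniform element) both give $\tfrac14$ independently of $w$. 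Collecting the surviving contributions and summing the geometric series gives $\tfrac14$ for $v=0$ and $\frac{2^{v+1}-3}{2^{v+2}}$ for $v\ge1$, which is exactly $W_{2}(h)$.

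The main obstacle in both $(b)$ and $(c)$ is the boundary regime $m_{p}(a)=m_{p}(h)$: there the valuation of $a+h$ is not determined by $a$ and $h$ alone, and one must control the fine distribution of the unit (respectively odd) part of $u+w$ rather than merely its valuation. Everything else is an organized summation of geometric series. For $p=2$ the extra hazard is the pair of carry effects at $m_{2}(a)=v\mp1$, and verifying that those exceptional contributions vanish or relocate cleanly is precisely what makes the final expression collapse to $W_{2}(h)$.
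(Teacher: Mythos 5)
The paper never proves this proposition internally --- it defers to the detailed computations in \cite{key-11} (and notes the related Lemma A.2 of \cite{key-12}) --- so there is no in-paper argument to compare yours against; judged on its own, your proof is correct. The passage to Haar measure on $\mathbb{Z}_p$ is legitimate: by Proposition \ref{prop:elements in residue rings} the indicator of the event $a\bmod p^{k}\in Sq(p,k)$ stabilizes pointwise off the null set $\{0,-h\}$, so bounded convergence identifies $Med^{(2)}(p,h)$ with the Haar measure of the limiting event, and your three-regime partition by $m_{p}(a)$ against $v=m_{p}(h)$ organizes that measure correctly. I checked both nontrivial assemblies. In case $(b)$, with your law for $m_{p}(u+w)$ --- which is the law \emph{conditional on $u$ being a unit}, and does sum to $1$ --- the three even-$v$ regimes contribute $\frac{1-p^{-v}}{1+p^{-1}}$, $\frac{p^{-(v+2)}}{1+p^{-1}}$ and $p^{-v}\bigl(\frac{p-2}{p}+\frac{p^{-2}}{1+p^{-1}}\bigr)$ respectively, and these indeed collapse to $\frac{1-p^{-(v+1)}}{1+p^{-1}}$; for odd $v$ the single surviving regime gives the same closed form. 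In case $(c)$, the carry bookkeeping is right: the regime $m_{2}(a)=v-1$ is annihilated, exactly one of $m_{2}(a)=v+1$ (when $\eqm w34$) or $m_{2}(a)\geq v+2$ (when $\eqm w14$) survives with mass $2^{-(v+3)}$, and the boundary $m_{2}(a)=v$ carries mass $2^{-(v+3)}$ in both subcases, so the total $\frac{1}{2}-2^{-v}+2^{-(v+2)}=\frac{2^{v+1}-3}{2^{v+2}}$ for $v\geq1$ (and $\frac{1}{4}$ for $v=0$) depends only on $v$, as the stated formula requires. Two points you should make explicit in a polished write-up: in case $(b)$ the boundary regime contributes only when $v$ is even, since $a$ itself must be representable there (you only say this implicitly via the odd-$v$ remark); and you should state the normalization under which your probabilities $\frac{p-2}{p-1}$ and $p^{-n}$ are asserted, since as raw Haar masses they are $(1-p^{-1})\frac{p-2}{p-1}$ and $(1-p^{-1})p^{-n}$, and mixing the two normalizations is the easiest way to lose a factor of $1-p^{-1}$ in the final sum.
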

We define $\mathcal{M}^{(2)}(n,h)$ to be the product of the above
densities 
\[
\mathcal{M}^{(2)}(n,h)=\prod_{p\leq n}Med^{(2)}(p,h)=W_{2}(h)\prod_{\begin{array}{c}
\substack{\eqm p34\\
p\leq n
}
\end{array}}\dfrac{1-p^{-(m_{p}(h)+1)}}{1+p^{-1}}.
\]
And as in the introduction we denote

\[
B_{h}(n)=\#\left\{ m\leq n|\, m,m+h\,\mbox{are representable as a sum of two squares}\right\} 
\]
and define 
\[
Y_{h}(n):=\frac{\mathcal{M}^{(2)}(n,h)}{\frac{1}{n}B_{h}(n)}.
\]
The density of representable pairs is thus:
\[
\frac{1}{n}B_{h}(n)\sim\frac{1}{Y_{h}(n)}\cdot W_{2}(h)\prod_{\begin{array}{c}
\substack{\eqm p34\\
p\leq n
}
\end{array}}\dfrac{1-p^{-(m_{p}(h)+1)}}{1+p^{-1}}.
\]

We extract the asymptotic term depending on $n$ from the above expression
using the ratio computed in Section \ref{sec:Ratio-Between-the-densities}.
Recall 
\[
\lden{}=\frac{\beta}{\sqrt{\log n}}=\frac{\prod\limits _{\eqm p34}\left(1-p^{-2}\right)^{-\frac{1}{2}}}{\sqrt{2\log n}}
\]
and so from Landau's Theorem together with the previous sections we
have for all $n$ 
\[
1=\left(\frac{\lden{}}{\pden{}/y(n)}\right)^{2}=\frac{2y(n)^{2}}{\log n}\prod_{\begin{array}{c}
\substack{\eqm p34\\
p\leq n
}
\end{array}}\dfrac{\left(1-p^{-2}\right)^{-1}}{\left(1+p^{-1}\right)^{-2}}
\]
and so we write 
\begin{eqnarray*}
 & \frac{1}{n}B_{h}(n)\sim\frac{1}{Y_{h}(n)}\cdot W_{2}(h){\displaystyle \prod_{\begin{array}{c}
\substack{\eqm p34\\
p\leq n
}
\end{array}}}\dfrac{1-p^{-(m_{p}(h)+1)}}{1+p^{-1}}\cdot\left(\frac{\lden{}}{\pden{}/y(n)}\right)^{2} & .
\end{eqnarray*}
Since
\begin{eqnarray*}
\dfrac{\left(1-p^{-(m_{p}(h)+1)}\right)}{\left(1+p^{-1}\right)}\frac{\left(1-p^{-2}\right)^{-1}}{\left(1+p^{-1}\right)^{-2}} & = & \dfrac{1-p^{-(m_{p}(h)+1)}}{1-p^{-1}}
\end{eqnarray*}
we have

\begin{eqnarray}
\frac{1}{n}B_{h}(n) & \sim & \frac{1}{\log n}\cdot\left(\frac{y(n)^{2}}{Y_{h}(n)}\right)\cdot2W_{2}(h){\displaystyle \prod_{\begin{array}{c}
\substack{\eqm p34\\
p\leq n
}
\end{array}}}\dfrac{1-p^{-(m_{p}(h)+1)}}{1-p^{-1}}\label{eq:pair normalization}
\end{eqnarray}

For $p$ such that $m_{p}(h)=0$ the product is $1$, and since we
are interested in $n\rightarrow\infty$, we can assume $n\geq h$
and so the product is over all $\eqm p34$ such that $p\mid h$. The
conjecture presented by Connors and Keating is thus equivalent to
the conjecture that for all $h$ 
\[
\frac{y(n)^{2}}{Y_{h}(n)}\rightarrow1
\]
as $n\rightarrow\infty$, which can be also stated as 
\[
\lim_{n\rightarrow\infty}Y_{h}(n)=\frac{1}{4}\frac{\pi}{e^{\gamma}}=y^{2}\mbox{ for all \ensuremath{h\in\mathbb{N}}},
\]
a conjecture for which we present numerical computations in Section
\ref{sec:Numerics}. Another interpretation of this conjecture would
be that as in the case of the density of integers representable as
a sum of two squares, the product expression $\mathcal{M}^{2}(n,h)$
gives the correct estimate up to a constant.

Assuming the validity of this conjecture the density of representable
pairs is given by 
\[
\frac{1}{n}B_{h}(n)=\frac{1}{\log n}\cdot2W_{2}(h)\prod_{\begin{array}{c}
\substack{\eqm p34\\
p\mid h
}
\end{array}}\dfrac{1-p^{-(m_{p}(h)+1)}}{1-p^{-1}}
\]
and so 
\[
\mathscr{T}_{h}=2W_{2}(h)\prod_{\begin{array}{c}
\substack{\eqm p34\\
p\mid h
}
\end{array}}\dfrac{1-p^{-(m_{p}(h)+1)}}{1-p^{-1}}.
\]

\section{Generalization to Other Binary Quadratic Forms\label{sec:Generalization-to-Other}}

In this section we generalize our conjectures and results for additional
binary quadratic forms.

\subsection{Preliminaries}

Let us look at the following family of positive definite binary quadratic
forms 
\[
q(d;x,y)=x^{2}+dy^{2}.
\]

\begin{defn}
\label{Idoneal defenition}We say that $d\in\mathbb{N}$ is a convenient
(idoneal) number if there is finite set of primes $S$, an integer
$N$ and congruence classes $c_{1},...,c_{k}\mbox{ mod \ensuremath{N}}$
such that for all primes $p\not\in S$
\[
p=x^{2}+dy^{2}\iff\eqm p{c_{1},...,c_{k}}N.
\]
\end{defn}
\begin{example}
For $d=1$, $S=\left\{ 2\right\} $, $c_{1}=1$ and $N=4$ we have
Fermat's result for sums of two squares. 
\end{example}
We focus here on convenient $d$'s such that the form $x^{2}+dy^{2}$
is of class number $1$ which are $d=1,2,3,4,7$ . In these cases
one can fully determine if an integer $n$ is representable by the
form simply by making sure that the primes which are not representable
appear with an even multiplicity in the integer's prime factorization. 

Again we are first interested in the mean density of representable
integers, and we can calculate the densities in the residue rings
in the exact same way that we did for $d=1$ and thus generalize (\ref{eq:mean density d=00003D1})
. In \cite{key-8} Shanks produces Landau's constants $\b_{1},\b_{2},\b_{3,}\b_{4},\b_{7}$
for which 
\[
B(d,n):=\#\left\{ m\leq n|\,\mbox{\ensuremath{m}is of the form \ensuremath{x^{2}+dy^{2}}}\right\} \sim\b_{d}\frac{n}{\sqrt{\log n}}
\]
as $n\rightarrow\infty$, and so we can again calculate the ratio
between the product and the analytic expressions as was done in Section
\ref{sec:Ratio-Between-the-densities} for sums of squares. 

First let us recall the following classical results (see \cite{key-7}):
\begin{thm}
\label{thm:integers representable by d-forms}An integer $n$ is representable
by the form \textup{$x^{2}+dy^{2}$ if and only if:}\end{thm}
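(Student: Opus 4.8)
The plan is to convert the representability of $n$ by $x^{2}+dy^{2}$ into a purely multiplicative statement by reading the form as a norm. Writing $K=\mathbb{Q}(\sqrt{-d})$ and $\mathcal{O}_{d}=\mathbb{Z}[\sqrt{-d}]$, one has $x^{2}+dy^{2}=N(x+y\sqrt{-d})$, so $n$ is representable exactly when $n=N(\alpha)$ for some $\alpha\in\mathcal{O}_{d}$. First I would record from \cite{key-7} the two structural inputs that make $d=1,2,3,4,7$ special: the form $x^{2}+dy^{2}$ is the principal form of its discriminant, and it has form class number one, i.e. the ring class group of $\mathcal{O}_{d}$ is trivial, so every invertible $\mathcal{O}_{d}$-ideal is principal. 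The idoneality of $d$ supplies the congruence description (mod $N$) of which primes the form represents, and this will furnish the base case of the multiplicative argument.

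Next I would analyze a single prime $p$ away from the conductor and the discriminant through its splitting in $\mathcal{O}_{d}$. If $p$ splits, then $(p)=\mathfrak{p}\overline{\mathfrak{p}}$ and $\mathfrak{p}$ is principal by class number one, whence $p=N(\mathfrak{p})$ is represented; if $p$ is inert, then $p$ itself is not a norm while $p^{2}=N(p)$ is, which forces $p$ to occur to an even power in any representable $n$; if $p$ ramifies, $(p)=\mathfrak{p}^{2}$ with $\mathfrak{p}$ principal, so again $p$ is represented. Thus, away from the bad primes, the non-representable primes are precisely the inert ones, and these are exactly the primes that must be constrained to even exponents.

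I would then assemble these local statements into the global criterion using multiplicativity of the norm together with unique factorization of ideals in the maximal order, descended to $\mathcal{O}_{d}$ for primes prime to the conductor. For the ``only if'' direction, factoring the ideal $(\alpha)$ when $n=N(\alpha)$ shows that each inert prime contributes an even exponent; for the ``if'' direction, a product of principal prime ideals over the split and ramified primes, multiplied by the rational squares coming from the inert primes, realizes $n$ as a norm and hence as $x^{2}+dy^{2}$, the relevant ideal being principal by class number one.

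The step I expect to be the real obstacle is the bookkeeping at the conductor, namely the prime $2$ in the non-maximal orders $\mathbb{Z}[\sqrt{-3}]$, $\mathbb{Z}[2i]$, $\mathbb{Z}[\sqrt{-7}]$ occurring for $d=3,4,7$. There the clean ``split/inert/ramified in the maximal order'' trichotomy does not by itself govern representability by the \emph{non-maximal} order, so I would verify by direct inspection that these finitely many primes behave consistently with the single even-multiplicity rule (for instance that $2$ is non-representable and may appear only to an even power in each of these cases). Once these bad primes are checked by hand, the remaining assembly is a routine consequence of class number one and the multiplicativity of the norm.
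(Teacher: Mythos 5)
Your skeleton---reading $x^{2}+dy^{2}$ as the norm form of the order $\mathbb{Z}[\sqrt{-d}]$, invoking class number one to make every invertible ideal principal, and concluding that away from the conductor the non-representable primes are exactly the inert ones, which must therefore occur to even exponents---is the standard argument, and it is essentially the one in Cox's book, which is exactly the reference the paper cites here: the paper itself states this theorem as a classical result and supplies no proof of its own. For $d=1,2$ (maximal orders) and $d=3$ (where the conductor prime $2$ is inert in the maximal order of $\mathbb{Q}(\sqrt{-3})$, so the even-multiplicity rule does govern it) your plan goes through.

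The genuine gap is in the step you yourself flag as the obstacle, the conductor prime $2$ for $d=4,7$: your proposed resolution asserts that $2$ ``may appear only to an even power,'' and this is false---indeed it contradicts the statement you are proving, which requires only $m_{2}(n)\not=1$ for these two forms. Concretely, $8=2^{2}+4\cdot1^{2}$ and $8=1^{2}+7\cdot1^{2}$, so $2$ occurs to the odd power $3$ in representable integers for both $d=4$ and $d=7$; at the level of your ideal-theoretic bookkeeping, the elements $2+2i\in\mathbb{Z}[2i]$ and $1+\sqrt{-7}\in\mathbb{Z}[\sqrt{-7}]$ have norm $8$, even though $2$ ramifies (resp.\ splits) in the maximal order and the primes above $2$ are non-invertible in the order (resp.\ do not descend to it). So your ``direct inspection'' would refute rather than confirm your rule, and the assembly step---products of principal ideals times rational squares from the constrained primes---breaks down, because the set of powers of $2$ that are norms from these non-maximal orders is $\left\{ 2^{k}:k\not=1\right\}$, not $\left\{ 4^{k}\right\}$. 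To repair the proof you must actually characterize, at the conductor, which $2$-parts occur among norms of the order (for instance by relating $x^{2}+4y^{2}$ to sums of two squares with one variable even, and $x^{2}+7y^{2}$ to the maximal-order norm form, tracking the index-$2$ sublattice), and show this yields precisely the condition $m_{2}(n)\not=1$; as written, your argument establishes the theorem only for $d=1,2,3$.
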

\begin{itemize}
\item If $d=1$, $m_{p}(n)$ is even for all primes $\eqm p34$.
\item If $d=2$, $m_{p}(n)$ is even for all primes $\eqm p{5,7}8$.
\item If $d=3$, $m_{p}(n)$ is even for all primes $\eqm p23$.
\item If $d=4$, $m_{p}(n)$ is even for all primes $\eqm p34$ and $m_{2}(n)\not=1$
.
\item If $d=7$, $m_{p}(n)$ is even for all primes $\eqm p{3,5,6}7$ and
$m_{2}(n)\not=1$.
\end{itemize}
The conditions for representation by these forms bare obvious resemblance.
\begin{defn}
For convenience reasons we divide the primes into the following sets:\end{defn}
\begin{itemize}
\item Say $p\in Q_{d}$ if $p$ is a prime such that $n=x^{2}+dy^{2}\Rightarrow m_{p}(n)$
is even. Notice that by Theorem \ref{thm:integers representable by d-forms}
and by the Prime Number Theorem for Arithmetic Progressions this set
consists of approximately half of the primes. 
\item Say $p\in R_{d}$ if $p$ is a prime such that $p\not\in Q_{d}$ and
$n=x^{2}+dy^{2}\Rightarrow m_{p}(n)$ has some constraint as described
in Theorem \ref{thm:integers representable by d-forms}, or if $p$
is such that $(p,N)\not=1$ where $N$ is as described in Definition
\ref{Idoneal defenition}, that is $N$ such that $p=x^{2}+dy^{2}\iff\eqm p{c_{1},...,c_{k}}N$.
Notice that $R_{d}$ is a finite set. 
\item say $p\in P_{d}$ if $p$ is a prime such that $p\not\in Q_{d}\bigcup R_{d}$,
or more directly if $p$ is of the form $x^{2}+dy^{2}$ and $(p,N)=1$.
Again this set consists of approximately half of the primes. \end{itemize}
\begin{example}
For the case of sums of squares, that is $d=1$, we write
\begin{eqnarray*}
 & Q_{1}=\left\{ p\mbox{ prime\ensuremath{:\,\eqm p34}}\right\} ,P_{1}=\left\{ p\mbox{ prime\ensuremath{:\,\eqm p14}}\right\} ,\\
 & R_{1}=\left\{ 2\right\} 
\end{eqnarray*}
For $d=7$ we write 
\begin{eqnarray*}
 & Q_{7}=\left\{ p\mbox{ prime\ensuremath{:\,\eqm p{3,5,6}7}}\right\} ,P_{7}=\left\{ p\mbox{ prime\ensuremath{:\,\eqm p{1,2,4}7}, \ensuremath{p\not=2}}\right\} ,\\
 & R_{7}=\left\{ 2,7\right\} .
\end{eqnarray*}

\end{example}
It is important to note that the reason we define the sets of primes
$Q_{d},R_{d},P_{d}$ the way we do and not by the values of $\left(\frac{-d}{p}\right)$,
which stands for the Legendre symbol, is that the Legendre symbol
is only defined for odd primes $p$, while the prime $p=2$ plays
an important role in our computations. On the other hand it will be
useful for us to notice that for $d=1,2,3,4,7$ indeed 
\[
\left(\frac{-d}{p}\right)=1\iff p\in P_{d}
\]
and 
\[
\left(\frac{-d}{p}\right)=-1\iff p\in Q_{d}
\]
unless $d=3$, in which case $Q_{3}=\left\{ p:\,\left(\frac{-d}{p}\right)=-1\right\} \cup\left\{ 2\right\} $.

\subsection{Ratio between the product density and Landau\textquoteright{}s density}

We continue by following the same methods established in Sections
\ref{sec: Modulo rings} and \ref{sec:Ratio-Between-the-densities}
for the definition of $\pden{}$ in order to define a product expression
$\pden d$ associated with the mean density of integers of the form
$x^{2}+dy^{2}$. Notice that for all the above $d$'s the condition
for being representable by the form is over the primes in $Q_{d}$,
plus some local conditions over the primes in $R_{d}$. Similarly
to what we have done in the previous sections we define the naive
expectation of the density of integers representable by the form $x^{2}+dy^{2}$
as 
\[
\pden d=\prod_{p\in R_{d}}w_{d}(p)\prod_{\begin{subarray}{c}
p\in Q_{d}\\
p\leq n
\end{subarray}}\left(1+p^{-1}\right)^{-1}
\]
with $w_{d}(p)$ the mean density of representable element in $\nicefrac{\mathbb{Z}}{p^{\text{k}}\mathbb{Z}}$,
$k\rightarrow\infty$, for $p\in R_{d}$. The primes $p\in P_{d}$
do not participate here since similarly to the case of sums of two
squares, the mean density of representable elements in $\nicefrac{\mathbb{Z}}{p^{\text{k}}\mathbb{Z}}$,
$k\rightarrow\infty$, is $1$. 

These products can be computed using Mertens' formula for arithmetic
progressions, as was done in the previous section for $d=1$:
\begin{eqnarray*}
\prod_{\substack{p\in Q_{d}\\
p\leq n
}
}\left(1+p^{-1}\right)^{-1} & \sim & \frac{e^{-\gamma/2}}{\sqrt{\log n}}\prod\limits _{p\in Q_{d}}\left(1-p^{-1}\right)^{-\frac{1}{2}}\left(1+p^{-1}\right)^{-1}\prod\limits _{p\in P_{d}\cup R_{d}}\left(1-p^{-1}\right)^{-\frac{1}{2}}.
\end{eqnarray*}

Again we are interested in the analogue of (\ref{eq:ratio}), that
is in the ratio between these products and the leading term of the
analytic expression given by the generalization of Landau's theorem
as shown in \cite{key-8}:

\begin{equation}
\lden d=\frac{\b_{u}}{\sqrt{\log n}}\,\,,\,\,\,\,\b_{d}=\delta_{d}\cdot g_{d}\cdot\left(\frac{L_{d}(1)\cdot2\left|d\right|}{\pi\varphi(2\left|d\right|)}\right)^{\frac{1}{2}}\label{eq:LanduaGeneralized}
\end{equation}
with $\varphi$ the Euler totient function and 
\begin{eqnarray*}
g_{d} & = & \prod_{\left(\frac{-d}{p}\right)=-1}\left(1-p^{-2}\right)^{-\frac{1}{2}}\\
L_{d}(s) & = & \sum_{\mbox{odd \ensuremath{n}}}\left(\frac{-d}{n}\right)n^{-s}=\prod_{\left(\frac{-d}{p}\right)=1}\left(1-p^{-s}\right)^{-1}\prod_{\left(\frac{-d}{p}\right)=-1}\left(1+p^{-s}\right)^{-1}\\
\delta_{d} & = & \begin{cases}
1 & ,\, d=1,2\\
\frac{2}{3} & ,\, d=3\\
\frac{3}{4} & ,\, d=4,7
\end{cases}
\end{eqnarray*}
Reformulating the products above we have
\[
g_{d}=\prod_{2\not=p\in Q_{d}}\left(1-p^{-2}\right)^{-\frac{1}{2}}=\gamma_{d}\prod_{p\in Q_{d}}\left(1-p^{-2}\right)^{-\frac{1}{2}}
\]
where $\gamma_{d}=\begin{cases}
1 & ,\, d=1,2,4,7\\
\frac{\sqrt{3}}{2} & ,\, d=3
\end{cases}$, and

\begin{eqnarray*}
\sqrt{L_{d}(1)} & = & \prod_{p\in P_{d}}\left(1-p^{-1}\right)^{-\frac{1}{2}}\prod_{2\not=p\in Q_{d}}\left(1+p^{-1}\right)^{-\frac{1}{2}}\\
 & = & \prod_{p\in P_{d}}\left(1-p^{-1}\right)^{-\frac{1}{2}}\lambda_{d}\prod_{p\in Q_{d}}\left(1+p^{-1}\right)^{-\frac{1}{2}}
\end{eqnarray*}
where $\lambda_{d}=\begin{cases}
1 & ,\, d=1,2,4,7\\
\sqrt{\frac{3}{2}} & ,\, d=3
\end{cases}$. \\
The ratio in question is therefore given by
\begin{eqnarray*}
 & {\displaystyle \lim_{n\rightarrow\infty}y_{d}(n)=}{\displaystyle \lim_{n\rightarrow\infty}\frac{\pden d}{\lden d}=\lim_{n\rightarrow\infty}\frac{\pden d}{\b_{d}/\sqrt{\log n}}}=\\
 & {\displaystyle \prod_{\begin{subarray}{c}
p\in R_{d}\end{subarray}}}w_{d}(p)\frac{1}{\delta_{d}}\sqrt{\frac{\pi}{e^{\gamma}}}\cdot\sqrt{\frac{\varphi(2\left|d\right|)}{2\left|d\right|}}\frac{1}{\gamma_{d}\lambda_{d}}{\displaystyle \prod_{\begin{subarray}{c}
p\in R_{d}\end{subarray}}}\left(1-p^{-1}\right)^{-\frac{1}{2}}.
\end{eqnarray*}

Recall $\frac{\varphi(n)}{n}=\prod\limits _{p|n}\left(1-p^{-1}\right)$.
For $d=1,2,4,7$ we have $p|2d\iff p\in R_{d}$ and so the products
cancel each other. For $d=3$ we have $2|2d$ and $2\not\in R_{3}$,
so we are left with the term $\left(1-2^{-1}\right)^{\frac{1}{2}}=\frac{1}{\sqrt{2}}$.
Since $\frac{1}{\sqrt{2}}\frac{2}{\sqrt{3}}\frac{\sqrt{2}}{\sqrt{3}}=\frac{2}{3}$
we can write 

\[
\lim_{n\rightarrow\infty}y_{d}(n)=\prod_{p\in R_{d}}w_{d}(p)\frac{1}{\delta_{d}}\sqrt{\frac{\pi}{e^{\gamma}}}\cdot s_{d}
\]
where $s_{d}=\begin{cases}
1 & ,\, d=1,2,4,7\\
\frac{2}{3} & ,\, d=3
\end{cases}$.

Computing case by case we prove the following theorem:
\begin{thm}
For $d=1,2,3,4,7$ the ratio between the product of densities in the
residue rings and Landau's density of integers representable by the
forms $x^{2}+dy$ converges to $\frac{1}{2}\sqrt{\frac{\pi}{e^{\gamma}}}$
as $n\rightarrow\infty$, that is 
\[
\lim_{n\rightarrow\infty}y_{d}(n)=\lim_{n\rightarrow\infty}\frac{\pden d}{\lden d}=\frac{1}{2}\sqrt{\frac{\pi}{e^{\gamma}}}=y.
\]

\end{thm}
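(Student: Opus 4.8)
The plan is to notice that the substantive analytic work is already complete: the computation preceding the theorem has reduced the limit to the \emph{finite} quantity
\[
\lim_{n\to\infty}y_d(n)=\prod_{p\in R_d}w_d(p)\cdot\frac{1}{\delta_d}\cdot s_d,
\]
where $\delta_d$ and $s_d$ are given by explicit tables and $R_d$ is a finite set of exceptional primes. The split primes $p\in P_d$ contribute $w_d(p)=1$, and the inert-type primes $p\in Q_d$ have already been absorbed into the Mertens-type product together with $\b_d$. Hence the only thing left is to evaluate $\prod_{p\in R_d}w_d(p)$ and to verify, separately for each of the five forms, that
\[
\prod_{p\in R_d}w_d(p)\cdot\frac{s_d}{\delta_d}=\tfrac12 .
\]
This is precisely the ``computing case by case'' step announced before the statement.

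First I would record $R_d$ in each case: $R_1=R_2=R_4=\{2\}$, $R_3=\{3\}$ and $R_7=\{2,7\}$, the prime $2$ being exceptional for every $d$ (as a ramified prime for $d=2$, and through the constraint $m_2(n)\neq1$ for $d=4,7$) and the ramified odd primes $3,7$ accounting for the rest. Next, for each $p\in R_d$ I would compute the local density $w_d(p)=\lim_{k\to\infty}\#Sq_d(p,k)/p^k$, where $Sq_d(p,k)$ is the set of elements of $\nicefrac{\mathbb{Z}}{p^{k}\mathbb{Z}}$ representable by $x^2+dy^2$. This is done exactly as in the propositions for $d=1$: one characterises $Sq_d(p,k)$ through the $p$-adic valuation of the element together with a congruence condition on its unit part, and then counts. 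I expect $w_1(2)=w_2(2)=w_3(3)=\tfrac12$ and $w_4(2)=\tfrac38$, while for $d=7$ the two exceptional densities combine to $w_7(2)\,w_7(7)=\tfrac38$; in every case the product with $s_d/\delta_d$ collapses to $\tfrac12$.

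The main obstacle is the local density computation at the ramified primes and at $p=2$. At a split or inert prime the representable set is either all of $\nicefrac{\mathbb{Z}}{p^{k}\mathbb{Z}}$ or is governed by the single condition that $m_p(a)$ be even, yielding the clean value $(1+p^{-1})^{-1}$; but at a ramified prime (such as $p=3$ for $d=3$ or $p=7$ for $d=7$) and at $p=2$ the set $Sq_d(p,k)$ is cut out by a more delicate interplay between the valuation and the residue class of the unit part, so the count must be carried out by hand for each relevant pair $(d,p)$ rather than read off a uniform formula. Once these finitely many densities are known, the verification is a short arithmetic check: for $d=3$ one gets $\tfrac12\cdot\tfrac32\cdot\tfrac23=\tfrac12$, for $d=4$ one gets $\tfrac38\cdot\tfrac43\cdot1=\tfrac12$, and the cases $d=1,2,7$ are identical in spirit. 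The content of the theorem is then the striking coincidence that these unrelated local contributions conspire, across five inequivalent forms, to the same universal constant $\tfrac12\sqrt{\pi/e^{\gamma}}$.
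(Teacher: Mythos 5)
Your proposal is correct and follows essentially the same route as the paper: the text preceding the theorem already reduces the limit to $\prod_{p\in R_d}w_d(p)\cdot\frac{s_d}{\delta_d}\sqrt{\frac{\pi}{e^{\gamma}}}$, and the paper's proof is exactly the announced ``computing case by case'' verification that this finite product equals $\frac12\sqrt{\frac{\pi}{e^{\gamma}}}$, with the local densities $w_d(p)$ at the exceptional primes computed as in the residue-ring propositions (the paper defers these computations to its reference [11], whereas you carry them out, with the correct values $w_1(2)=w_2(2)=w_3(3)=\tfrac12$, $w_4(2)=\tfrac38$, $w_7(2)w_7(7)=\tfrac38$). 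The only slip is the parenthetical claim that $2$ is exceptional for \emph{every} $d$: for $d=3$ one has $2\in Q_3$, not $R_3$, exactly as your own (correct) list $R_3=\{3\}$ indicates.
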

This is quite a surprising result, which makes the constant $y=\frac{1}{2}\sqrt{\frac{\pi}{e^{\gamma}}}$
somewhat universal as the ratio between the density of integers representable
by the forms at hand and the naively constructed Mertens-type products
we have presented.

\subsection{Pair correlation conjecture }

We can now propose a conjecture for the pair correlation function
for the forms $x^{2}+dy^{2}$ with $d=1,2,3,4,7$, generalizing (\ref{eq:Keating})
and (\ref{eq:Hardy-LittlewoodSumsofsquares}) . Denote by $W_{d,p}(h)$
the density of representable pairs $\left(a,a+h\right)$ in $\nicefrac{\mathbb{Z}}{p^{\text{k}}\mathbb{Z}}$,
$k\rightarrow\infty$, for $p\in R_{d}$, and $Y_{d,h}(n)$ the dependance
term which must be taken into consideration. We extract the asymptotic
term depending on $n$ exactly as was done in Section \ref{sec:Representable-Pairs-In}:

\begin{eqnarray*}
 & {\displaystyle \frac{1}{Y_{d,h}(n)}\prod_{p\in R_{d}}W_{d,p}(h)\cdot\prod_{\begin{subarray}{c}
p\in Q_{d}\\
p\leq n
\end{subarray}}\dfrac{1-p^{-(m_{p}(h)+1)}}{1+p^{-1}}}\\
\sim & {\displaystyle \frac{1}{Y_{d,h}(n)}\prod_{p\in R_{d}}W_{d,p}(h)\prod_{\begin{subarray}{c}
p\in Q_{d}\\
p\leq n
\end{subarray}}\dfrac{1-p^{-(m_{p}(h)+1)}}{1+p^{-1}}\left(\frac{\lden d}{\pden d/y_{d}(n)}\right)^{2}}\\
\sim & {\displaystyle \frac{1}{\log n}\cdot\left(\frac{y_{d}^{2}(n)}{Y_{d,h}(n)}\right)\prod_{p\in R_{d}}\frac{W_{d,p}(h)}{w_{d}^{2}(p)}\prod_{\begin{subarray}{c}
p\in Q_{d}\\
p\leq n
\end{subarray}}\dfrac{1-p^{-(m_{p}(h)+1)}}{\left(1+p^{-1}\right)^{-1}}\cdot\delta_{d}^{2}\cdot g_{d}^{2}\cdot\frac{L_{d}(1)\cdot2\left|d\right|}{\pi\varphi(2\left|d\right|)}} & .
\end{eqnarray*}

Let us first look at the products at hand. As before
\begin{eqnarray*}
 & {\displaystyle \prod_{\begin{subarray}{c}
p\in Q_{d}\\
p\leq n
\end{subarray}}\dfrac{1-p^{-(m_{p}(h)+1)}}{\left(1+p^{-1}\right)^{-1}}\cdot g_{d}^{2}=\prod_{\begin{subarray}{c}
p\in Q_{d}\\
p\leq n
\end{subarray}}\dfrac{1-p^{-(m_{p}(h)+1)}}{\left(1+p^{-1}\right)^{-1}}\prod_{2\not=p\in Q_{d}}\left(1-p^{-2}\right)^{-1}}\\
 & {\displaystyle \sim\prod_{\begin{subarray}{c}
p\in Q_{d}\\
p\mid h
\end{subarray}}\dfrac{1-p^{-(m_{p}(h)+1)}}{1-p^{-1}}\cdot S_{d}}
\end{eqnarray*}
where $S_{d}=\begin{cases}
1 & ,\, d=1,2,4,7\\
\frac{3}{4} & ,\, d=3
\end{cases}$.

Again the conjecture is that for all $h$ 
\[
\frac{y_{d}^{2}(n)}{Y_{d,h}(n)}\rightarrow1
\]
as $n\rightarrow\infty$, and so assuming the validity of this conjecture
the density of pairs of the form $x^{2}+dy^{2}$ is given by 
\[
\frac{1}{n}B_{h}(d,n)=\frac{1}{\log n}\cdot\mathscr{T}_{d,h}
\]
where 
\begin{equation}
\mathscr{T}_{d,h}=c_{d}\prod_{p\in R_{d}}W_{d,p}(h)\prod_{\begin{subarray}{c}
p\in Q_{d}\\
p\mid h
\end{subarray}}\dfrac{1-p^{-(m_{p}(h)+1)}}{1-p^{-1}}\label{eq:d-pair correlation}
\end{equation}
and 
\[
c_{d}=\delta_{u}^{2}\frac{L_{u}(1)\cdot2\left|u\right|}{\pi\varphi(2\left|u\right|)}\prod\limits _{p\in R_{d}}\frac{1}{w_{d}^{2}(p)}S_{d}
\]

It is left to compute $c_{d}$ and $W_{d,p}(h)$ case by case. Dirichlet's
class number formula (see \cite{key-9}) gives 
\begin{equation}
L_{1}(1)=\frac{\pi}{4},\, L_{2}(1)=\frac{\pi}{2\sqrt{2}},\, L_{3}(1)=\frac{\pi}{2\sqrt{3}},\, L_{4}(1)=\frac{\pi}{4},\, L_{7}(1)=\frac{\pi}{2\sqrt{7}}\label{eq:L-function}
\end{equation}
and so plugging all the different term we have
\begin{equation}
c_{1}=2,\, c_{2}=2\sqrt{2},\, c_{3}=\frac{2}{\sqrt{3}},\, c_{4}=2,\, c_{7}=\frac{2\sqrt{7}}{3}.\label{eq:c-constants}
\end{equation}
In addition, calculations similar to those shown for sums of squares
in Section \ref{sec:Representable-Pairs-In} give 
\[
W_{1,2}(h)=\begin{cases}
\dfrac{1}{4} & ,\, m_{2}(h)=0\\
\dfrac{2^{m_{2}(h)+1}-3}{2^{m_{2}(h)+2}} & ,\, m_{2}(h)\geq1
\end{cases},\,\, W_{2,2}(h)=\begin{cases}
\dfrac{1}{4} & ,\, m_{2}(h)=0,1\\
\dfrac{2^{m_{2}(h)}-3}{2^{m_{2}(h)+1}} & ,\, m_{2}(h)\geq2
\end{cases}
\]

\[
W_{3,3}(h)=\frac{1}{2}\cdot\dfrac{3^{m_{3}(h)+1}-2}{3^{m_{3}(h)+1}},\,\,\,\,\,\,\,\,\,\,\,\,\,\,\,\, W_{4,2}(h)=\begin{cases}
\frac{1}{8} & ,\, m_{2}(h)=0\\
0 & ,\, m_{2}(h)=1\\
\frac{5}{16} & ,\, m_{2}(h)=2\\
\frac{3\cdot2^{m_{2}(h)-1}-3}{2^{m_{2}(h)+2}} & ,\, m_{2}(h)\geq3
\end{cases}
\]

\[
W_{7,2}(h)=\begin{cases}
\frac{1}{2} & ,\, m_{2}(h)=0,1\\
\frac{3}{4} & ,\, m_{2}(h)\geq2
\end{cases},\,\,\,\,\,\,\,\,\,\,\,\,\,\,\,\,\,\,\,\,\,\,\,\,\,\,\,\,\,\,\, W_{7,7}(h)=\frac{1}{2}\cdot\dfrac{7^{m_{7}(h)+1}-4}{7^{m_{7}(h)+1}}.
\]

\subsection{Distribution in short intervals - the second moment}

We wish to generalize our result from Section \ref{sec:Distribution-In-Small}
concerning the second moments of the distribution of representable
integers in short intervals. 

We are interested in the distribution of values of $B(d,m+\alpha_{d})-B(d,m)$,
which stands for the number of integers of the form $x^{2}+dy^{2}$
in the interval $\left(m,m+\alpha_{d}\right)$, for $m\leq n$ and
$\alpha_{d}\sim\frac{\lambda}{\b_{d}}\sqrt{\log n}$ . Assuming (\ref{eq:d-pair correlation})
we wish to show that the second moment of this distribution is consistent
with a Poissonian distribution with parameter $\lambda$, and so we
prove Gallagher's Lemma for integers of the form $x^{2}+dy^{2},\, d=1,2,3,4,7$
and $k=2$.
\begin{thm}
The singular series $\mathscr{T}_{d,h}$ has mean value $\b_{d}$
for $d=1,2,3,4,7$ as defined in (\ref{eq:LanduaGeneralized}). More
explicitly 

\[
\sum\limits _{1\leq d_{1\not}\not=d_{2}\leq H}\mathscr{T}_{d,h}=2\sum_{1\leq h\leq H-1}\left(H-h\right)\mathscr{T}_{d,h}=\b_{d}^{2}H^{2}+O_{\varepsilon}(H^{1+\varepsilon})
\]
as $H\rightarrow\infty$, for all $\varepsilon>0$.\end{thm}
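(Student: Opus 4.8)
The plan is to run the proof of Theorem~\ref{thm:main result} essentially verbatim, with the sets $Q_{d},P_{d},R_{d}$ taking over the roles played by $\{\eqm p34\}$, $\{\eqm p14\}$ and $\{2\}$ when $d=1$. As there, the combinatorial identity $\sum_{1\le d_{1}\ne d_{2}\le H}\mathscr{T}_{d,h}=2\sum_{1\le h\le H-1}(H-h)\mathscr{T}_{d,h}$ reduces the problem to estimating $\sum_{1\le h\le H-1}(H-h)a_{d}(h)$, where I set $a_{d}(h)=2\mathscr{T}_{d,h}$. From (\ref{eq:d-pair correlation}) the function $a_{d}(h)$ is, up to the global constant $2c_{d}$, a product of local factors depending only on $m_{p}(h)$: the factor is $1$ for $p\in P_{d}$, equals $\frac{1-p^{-(m_{p}(h)+1)}}{1-p^{-1}}$ for $p\in Q_{d}$, and equals $W_{d,p}(h)$ for $p$ in the finite set $R_{d}$. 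Consequently the Dirichlet series $D_{d}(s)=\sum_{h\ge1}a_{d}(h)h^{-s}$ (absolutely convergent for $\sigma>1$) factors as an Euler product
\[
D_{d}(s)=\Bigl[2c_{d}\prod_{p\in R_{d}}\sum_{k\ge0}\frac{W_{d,p}(p^{k})}{p^{ks}}\Bigr]\prod_{p\in P_{d}}\bigl(1-p^{-s}\bigr)^{-1}\prod_{p\in Q_{d}}\Bigl(1+\frac{1}{1-p^{-1}}\frac{p^{-s}}{1-p^{-s}}-\frac{p^{-1}}{1-p^{-1}}\frac{p^{-(s+1)}}{1-p^{-(s+1)}}\Bigr),
\]
where the $Q_{d}$ factor coincides with the factor $Q(s)$ of Section~\ref{sec:Distribution-In-Small}, since $\sum_{k\ge0}\frac{1-p^{-(k+1)}}{(1-p^{-1})p^{ks}}$ simplifies to exactly that expression.

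Next I would compare $D_{d}$ with $\zeta(s)=\prod_{p}(1-p^{-s})^{-1}$ by forming $D_{d}(s)/\zeta(s)=D_{d}(s)\prod_{p}(1-p^{-s})$. The $P_{d}$ factors cancel; each $Q_{d}$ factor multiplied by $(1-p^{-s})$ reduces, exactly as in Section~\ref{sec:Distribution-In-Small}, to $1+O(p^{-(\sigma+1)})$, so the $Q_{d}$ product converges and is analytic for $\sigma>0$; and the $R_{d}$ contribution is a finite product of rational functions of $p^{-s}$, analytic for $\sigma>0$ since $|p^{-s}|<1$ there. Thus $D_{d}(s)=A_{d}(s)\zeta(s)$ with $A_{d}$ analytic and bounded for $\sigma>0$, so $D_{d}$ is analytic for $\sigma>0$ apart from a simple pole at $s=1$ with $\Res 1{D_{d}(s)}=A_{d}(1)$.

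The crucial and only genuinely new step is the evaluation $A_{d}(1)=2\b_{d}^{2}$. Using that at $s=1$ each $Q_{d}$ factor times $(1-p^{-1})$ collapses to $\frac{1}{1-p^{-2}}$ (the algebraic simplification already verified for $d=1$, which does not depend on the residue class of $p$), one gets
\[
A_{d}(1)=\Bigl[2c_{d}\prod_{p\in R_{d}}\bigl(1-p^{-1}\bigr)\sum_{k\ge0}\frac{W_{d,p}(p^{k})}{p^{k}}\Bigr]\prod_{p\in Q_{d}}\frac{1}{1-p^{-2}}.
\]
On the other hand (\ref{eq:LanduaGeneralized}) together with $g_{d}^{2}=\gamma_{d}^{2}\prod_{p\in Q_{d}}(1-p^{-2})^{-1}$ gives $2\b_{d}^{2}=2\delta_{d}^{2}\gamma_{d}^{2}\frac{L_{d}(1)\cdot2\left|d\right|}{\pi\varphi(2\left|d\right|)}\prod_{p\in Q_{d}}(1-p^{-2})^{-1}$, so the infinite products over $Q_{d}$ cancel and, after inserting the definition of $c_{d}$, the required identity $A_{d}(1)=2\b_{d}^{2}$ reduces to the finite relation
\[
S_{d}\prod_{p\in R_{d}}\frac{1-p^{-1}}{w_{d}^{2}(p)}\sum_{k\ge0}\frac{W_{d,p}(p^{k})}{p^{k}}=\gamma_{d}^{2}.
\]
I expect this case-by-case verification to be the main obstacle: it requires summing the geometric-type series $\sum_{k}W_{d,p}(p^{k})p^{-k}$ for each $d\in\{1,2,3,4,7\}$ using the tabulated $W_{d,p}$, and matching against $w_{d}(p),S_{d}$ and $\gamma_{d}$. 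The delicate cases are $d=4$, where $W_{4,2}$ vanishes at $m_{2}(h)=1$, and $d=7$, where both primes $2$ and $7$ lie in $R_{7}$; each remaining case is a routine finite computation.

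With $A_{d}(1)=2\b_{d}^{2}$ in hand, the endgame is identical to that of Theorem~\ref{thm:main result}. Perron's formula gives $\sum_{1\le h\le H}a_{d}(h)(H-h)=\frac{1}{2\pi i}\int_{2-i\infty}^{2+i\infty}\frac{D_{d}(s)}{s(s+1)}H^{s+1}ds$. Shifting the contour to $\sigma=\sigma_{0}$ with $0<\sigma_{0}<1$ picks up the residue $\frac{A_{d}(1)}{2}H^{2}=\b_{d}^{2}H^{2}$ from the simple pole of $\zeta$ at $s=1$, while the boundedness of $A_{d}$ together with the convexity bound $|\zeta(\sigma+it)|\ll_{\varepsilon}(1+|t|)^{(1-\sigma)/2+\varepsilon}$ bounds the remaining integral by $O(H^{\sigma_{0}+1})$. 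Taking $\sigma_{0}=\varepsilon$ arbitrarily small yields $\sum_{1\le h\le H}a_{d}(h)(H-h)=\b_{d}^{2}H^{2}+O_{\varepsilon}(H^{1+\varepsilon})$, which is the asserted mean value for $\mathscr{T}_{d,h}$.
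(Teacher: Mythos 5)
Your proposal is correct and follows essentially the same route as the paper's proof: the Euler-product factorization of the Dirichlet series $D_{d}(s)$ into factors over $P_{d}$, $Q_{d}$ and $R_{d}$, comparison with $\zeta(s)$ to get $D_{d}(s)=A_{d}(s)\zeta(s)$ with $A_{d}$ analytic for $\sigma>0$, evaluation of the residue at $s=1$, and Perron's formula with a contour shift. The only cosmetic differences are your normalization $a_{d}(h)=2\mathscr{T}_{d,h}$ (pulling the constant $2c_{d}$ out of the Euler product) in place of the paper's strictly multiplicative $a_{d}(h)=\mathscr{T}_{d,h}/\mathscr{T}_{d,1}$, and your more explicit reduction of $A_{d}(1)=2\beta_{d}^{2}$ to the finite identity $S_{d}\prod_{p\in R_{d}}\frac{1-p^{-1}}{w_{d}^{2}(p)}\sum_{k\geq0}W_{d,p}(p^{k})p^{-k}=\gamma_{d}^{2}$ at the primes of $R_{d}$, a case-by-case check which the paper likewise asserts rather than writes out, and which does indeed hold for all five values of $d$.
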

\begin{proof}
We follow the proof described in Section \ref{sec:Distribution-In-Small}.
First we normalize $\mathscr{T}_{d,h}$ by defining $a_{d}(h)=\frac{\mathscr{T}_{d,h}}{\mathscr{T}_{d,1}}$,
which is now multiplicative. Then we show that the corresponding Dirichlet
series $D_{d}(s)$ has a simple pole at $s=1$ with residue $\frac{\b_{d}}{\mathscr{T}_{d,1}}$,
as was done for sums of squares. Following the exact same steps detailed
in Section \ref{sec:Distribution-In-Small} we have 
\[
D_{d}(s)=R_{d}(s)P_{d}(s)Q_{d}(s)
\]
where

\begin{eqnarray*}
R_{d}(s) & = & \prod\limits _{p\in R_{d}}\left(1+\sum_{k=1}^{\infty}\frac{a_{d}(p^{k})}{p^{ks}}\right)\\
P_{d}(s) & = & \prod_{p\in P_{d}}\left(1-p^{-s}\right)^{-1}\\
Q_{d}(s) & = & \prod_{p\in Q_{d}}\left(1+\frac{1}{1-p^{-1}}\frac{p^{-s}}{1-p^{-s}}-\frac{p^{-1}}{1-p^{-1}}\frac{p^{-(s+1)}}{1-p^{-(s+1)}}\right).
\end{eqnarray*}

It can be shown $D_{d}(s)=A_{d}(s)\zeta(s)$ with $A_{d}(s)$ analytic,
and so to calculate the residue of $D(s)$ at $s=1$ it is left to
calculate $A_{d}(1)$ which gives 
\[
A_{d}(1)=\lim_{s\rightarrow1}\frac{D_{d}(s)}{\zeta(s)}=\prod\limits _{p\in R_{d}}\frac{1+\sum\limits _{k=1}^{\infty}\frac{a_{d}(p^{k})}{p^{k}}}{\left(1-p^{-1}\right)^{-1}}\prod_{p\in Q_{d}}\left(1-p^{-2}\right)^{-1}.
\]

Recall 
\[
\b_{d}^{2}=\delta_{d}^{2}\frac{L_{d}(1)\cdot2\left|d\right|}{\pi\varphi(2\left|d\right|)}\prod_{p\in Q_{d}}\left(1-p^{-2}\right)^{-1}
\]
and so it remains to show that indeed for $d=1,2,4,7$
\[
\prod\limits _{p\in R_{d}}\frac{1+\sum\limits _{k=1}^{\infty}\frac{a_{d}(p^{k})}{p^{k}}}{\left(1-p^{-1}\right)^{-1}}=\frac{\delta_{d}^{2}\frac{L_{d}(1)\cdot2\left|d\right|}{\pi\varphi(2\left|d\right|)}}{\mathscr{T}_{d,1}}
\]

We continue exactly as was detailed in Section \ref{sec:Distribution-In-Small}
for sums of squares. Plugging in all the relevant constants, all computed
above, we arrive at the desired result.
\end{proof}

\section{Numerical Computations\label{sec:Numerics}}

The approach taken in \cite{key-3} as well as ours to the pair correlation
conjecture for integers representable as the sum of two squares, stated
in (\ref{eq:Keating}), is essentially heuristic, and so some numerical
computations are in place in order to support our conjecture. The
conjecture as stated here is that as $n\rightarrow\infty$ 
\[
\frac{y(n)^{2}}{Y_{h}(n)}\rightarrow1
\]
which, as shown in $\eqref{eq:pair normalization}$, can be calculated
by taking the ratio between the numeric density of pairs and the conjectured
pair correlation function:\\
 
\[
\frac{y(n)^{2}}{Y_{h}(n)}={\displaystyle \frac{\frac{1}{n}\#\left\{ m\leq n|\, m\mbox{ and \ensuremath{m+h}}\mbox{ are representable}\right\} }{\frac{1}{\log n}\cdot2W_{2}(h)\prod\limits _{\begin{array}{c}
\substack{\eqm p34\\
p|h
}
\end{array}}\dfrac{1-p^{-(m_{p}(h)+1)}}{1-p^{-1}}}}
\]

In Figure 7.1 we present some calculations of this ratio for various
$h$ :

%
\begin{figure}[H]
\includegraphics[scale=0.5]{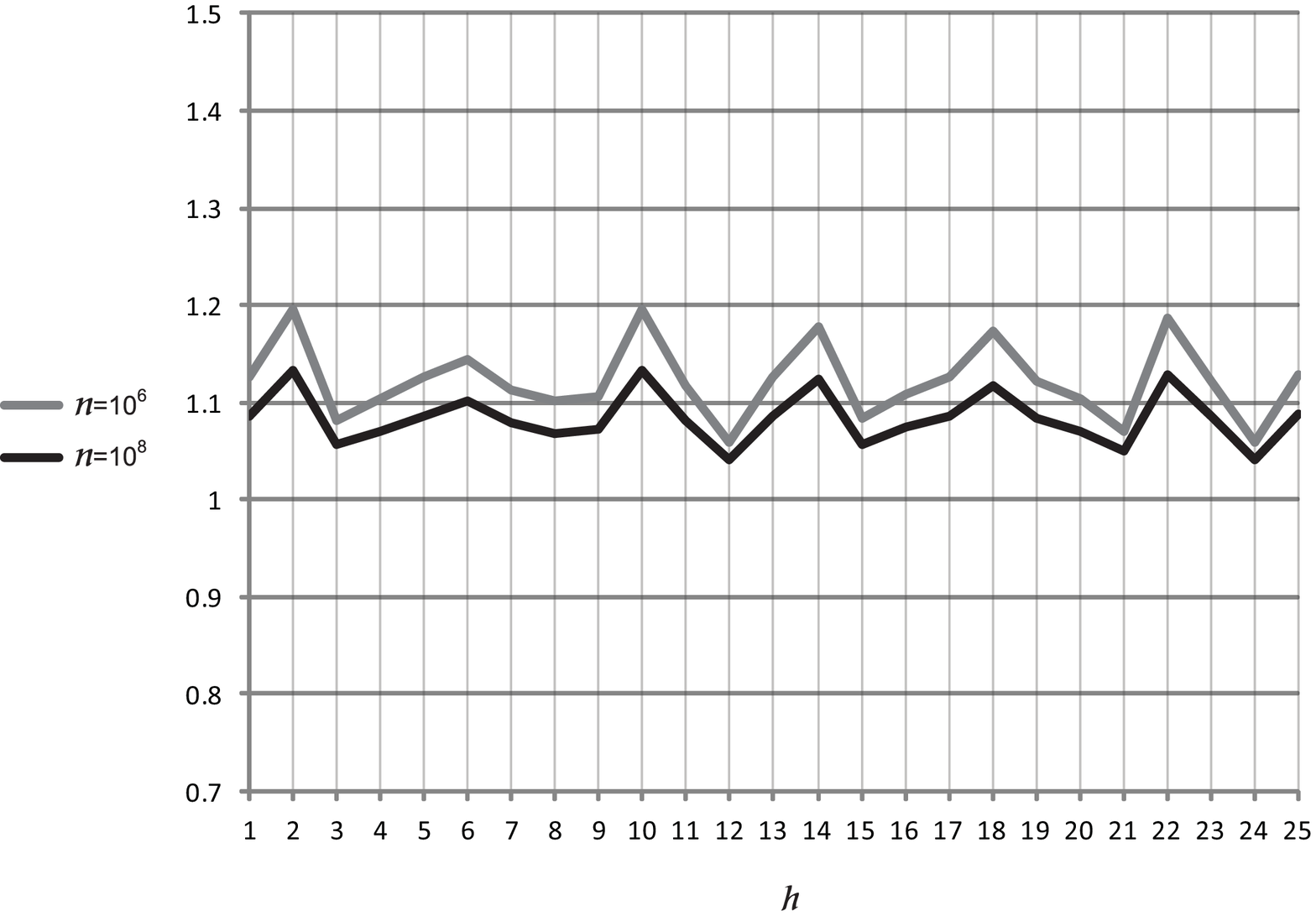}
\caption{$\frac{y(n)^{2}}{Y_{h}(n)}$ for $1\leq h\leq25$ at $n=10^{6},$$10^{8}$}
\end{figure}

Examining different values of $h$ for which the primes $2$ and $\eqm p34$
appear with equal multiplicity, such as $h=1,5,17,25$ or $h=4,20$,
one can see they take very similar values. This was checked for many
more values of $h$ which are not shown here and so strengthens our
belief that the pair correlation depends only on the multiplicity
of these primes in $h$. 

One can also see that the fluctuations between different values of
$h$ diminish for larger $n$, where the peaks in the above graph
are obtained at values of $h$ for which $m_{2}(h)=1,2$ or $m_{3}(h)=1$,
since the small primes are the most dominant in our computations. 

We must not be discouraged by the extremely slow decay to $1$, for
it is consistent with the large error term which appears in Landau's
theorem in (\ref{eq:Landau's theorem}). In fact the convergence implied
in Landau's theorem, or more precisely 
\[
\b(n)^{2}={\displaystyle \left(\frac{\#\left\{ m\leq n|\, m\mbox{ is representable}\right\} }{\b\frac{n}{\sqrt{\log n}}}\right)^{2}}\rightarrow1
\]
as $n\rightarrow\infty$, shows similar behavior as shown in Figure
$7.2$, in which the values for the ratio $\frac{y(n)^{2}}{Y_{h}(n)}$
are calculated for $h=1$. The reason we compare the rate of convergence
to that of $\b(n)^{2}$ and not to $\b(n)$ is that we look at pairs
of representable integers. The values for the ratio $\frac{y(n)^{2}}{Y_{h}(n)}$
are calculated for $h=1$.

%
\begin{figure}[H]
\includegraphics[scale=0.5]{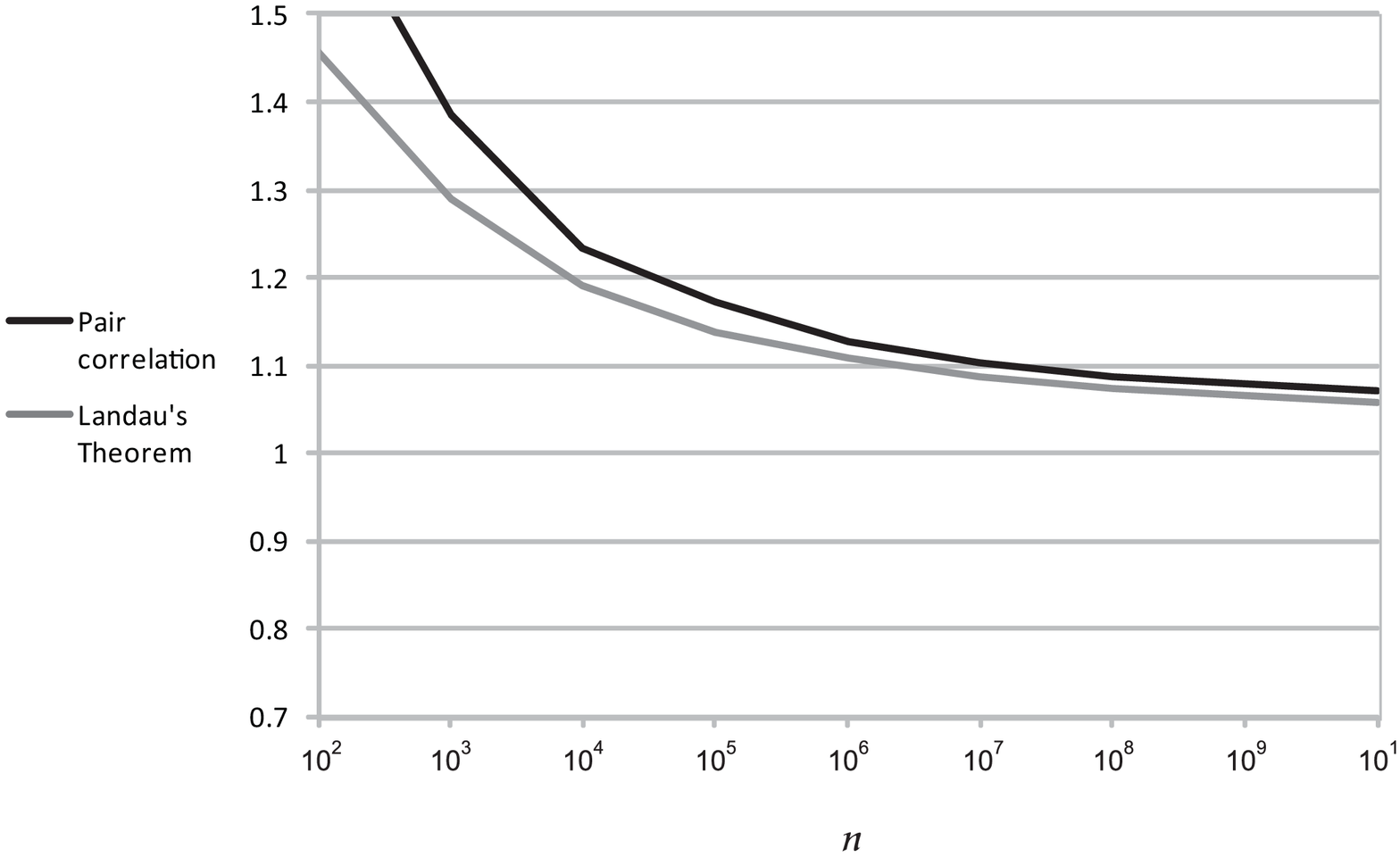}
\caption{$\frac{y(n)^{2}}{Y_{1}(n)}$ and Landau's $\b^{2}$ convergence}
\end{figure}

The generalizations presented in Section \ref{sec:Generalization-to-Other}
for integers of the form $x^{2}+dy^{2}$ show similar numeric results.
Figure $7.3$ is the equivalent of Figure $7.1$ for integers representable
by $x^{2}+2y^{2}$.
\begin{figure}[H]
\includegraphics[scale=0.5]{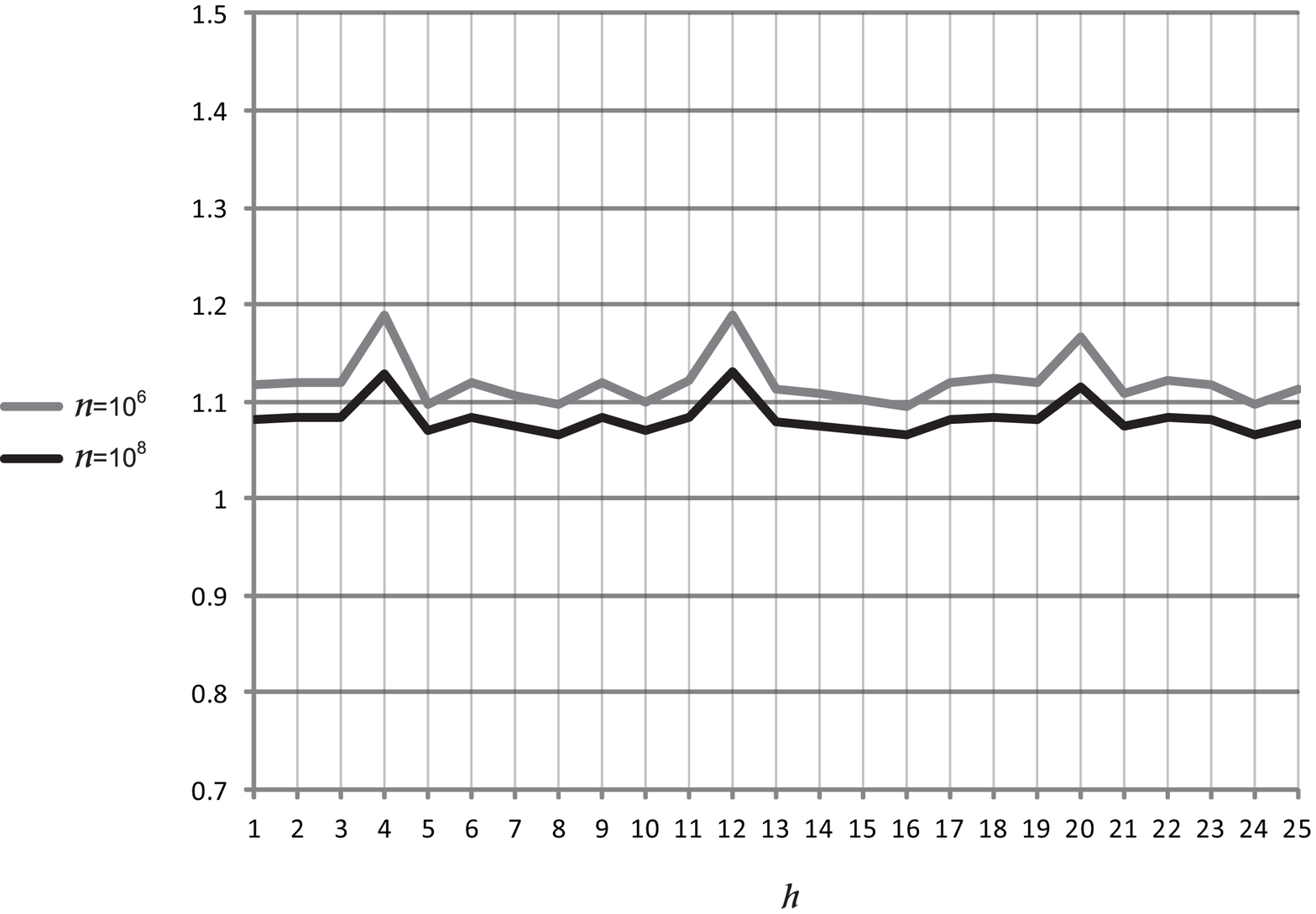}
\caption{$\frac{y_{2}(n)^{2}}{Y_{2,h}(n)}\mbox{ for}1\leq h\leq25\mbox{ at }n=10^{6},10^{8}$}
\end{figure}

We have obtained results of this type for the other forms in question
where the main difference between the forms is the location of the
peaks, which occur at values of $h$ with small $m_{p}(h)$ for small
primes $p\in Q_{d}\cup R_{d}$.

To conclude we have arrived with numerical results which are consistent
with our expectation regarding the dependency on the prime decomposition
of $h$, and regarding the rate of convergence. Note that the numerical
data presented here improves previous computations by a factor of
$10$ for $1\leq h\leq25$ as appears in Figure $7.1$ and by $1000$
for $h=1$ as appears in Figure $7.3$.

\section{further Directions}

The work presented here may be expanded by producing conjectures for
$k-$correlation functions for the set of representable pairs for
$k\geq3$, as described in (\ref{eq:Hardy-LittlewoodSumsofsquares}).
For example, following the methods presented for the calculation of
the mean density and the pair correlation one can derive the following
result for the density of representable triplets of the form $\left(m,m+1,m+2\right)$
for $m\leq n$, given by 
\begin{equation}
\frac{1}{\log^{\frac{3}{2}}n}\cdot\frac{1}{8\b}\cdot\prod_{\begin{subarray}{c}
\eqm p34\\
m\leq n
\end{subarray}}\frac{1-\frac{2}{p}}{\left(1-\frac{1}{p}\right)^{2}}\thickapprox\frac{0.11698}{\log^{\frac{3}{2}}n}\label{eq:triplets}
\end{equation}

It seems possible to generalize this result for triplets $\left(m,m+h_{1},m+h_{2}\right)$
and so on for higher degrees, though it would be difficult to obtain
a general $k-$correlation function this way because of the inductive
element of our approach. Also when comparing the expression for the
density of representable triplets (\ref{eq:triplets}) to the expression
for the density of representable pairs (\ref{eq:Keating}) one can
easily notice that the product for the latter depends only on primes
dividing $h$ , where in the case of the triplets the product is over
all primes $\eqm p34$ and so the manipulation of such expressions
is bound to be more complicated. 

A second direction, assuming a $k-$correlation function is obtained,
is to prove (\ref{eq:Gallagher's lemma sum of squares}), which is
a version of Gallagher's Lemma (\ref{eq:Gallagher's lemma primes})
for sums of two squares. Gallagher's approach in \cite{key-4}, and
similarly the approach taken by Ford in \cite{key-10} when proving
the Lemma in the case of the primes, would apparently not do in the
case of sums of two squares. I addition it is important to note that
our proof of Gallagher's Lemma for sums of two squares and $k=2$
uses the methods of the analytic theory of Dirichlet series, and these
methods become extremely difficult in higher dimensions. This means
that even for $k=3,4$ a new approach for Gallagher's Lemma for sums
of squares must be found. For these reasons we did not pursue any
additional $k-$correlation conjectures.

It is important to note that the main difference between the case
of the set of primes and the case of the set of integers representable
as a sum of two squares is the $k-$correlation conjectures. Hardy
and Littlewood's conjecture for primes (\ref{eq:Hardy-Littlwood})
depends only on $\nu_{\mathbf{d}}(p)$, which stands for the number
of residue classes modulo $p$ occupied by $d_{1},...,d_{k}$, which
in the case of $k=2$ is equivalent to whether or not $p$ divides
$d_{2}-d_{1}$ or in other words whether or not $m_{p}(d_{2}-d_{1})$
is $0$. In the case of the sums of squares Connors and Keating's
conjecture (\ref{eq:Keating}) and the numerical work presented in
Section (\ref{sec:Numerics}) provide evidence of dependence also
on the values of $m_{p}(d_{2}-d_{1})$.

\end{document}